\documentclass{amsart}
\usepackage{amsmath,amsthm,amssymb,amsfonts}
\input xy
\xyoption{all}
\usepackage[active]{srcltx}

\textwidth 15cm
   \oddsidemargin 0.7cm
   \evensidemargin 0.7cm
   \textheight 45\baselineskip

%% theorems
\theoremstyle{plain}
\newtheorem{thm}{Theorem}[section]
\newtheorem{prop}[thm]{Proposition}
\newtheorem{lem}[thm]{Lemma}
\newtheorem{cor}[thm]{Corollary}

\theoremstyle{definition}
\newtheorem{defn}[thm]{Definition}

\theoremstyle{remark}
\newtheorem{rem}[thm]{Remark}

\newtheorem{exam}[thm]{Example}

%% maps
\newcommand{\Hom}{\ensuremath{\mathrm{Hom}} }

\newcommand{\Aut}{\ensuremath{\mathrm{Aut}} }

\newcommand{\Out}{\ensuremath{\mathrm{Out}} }

\newcommand{\id}{\ensuremath{\mathrm{id}} }

\newcommand{\rank}{\ensuremath{\mathrm{rk }} }

%% modules

%% categories

%% groups

\newcommand{\normal}{\ensuremath{\vartriangleleft} }
\newcommand{\normaleq}{\ensuremath{\trianglelefteq} }

\newcommand{\nnormal}{\ensuremath{\ntriangleleft} }

%% blocks

%% letters
\newcommand{\F}{\ensuremath{\mathcal{F}} }
\newcommand{\G}{\ensuremath{\mathcal{G}} }

\renewcommand{\F}{\ensuremath{\mathcal{F}} }
\newcommand{\CG}{\ensuremath{\mathcal{G}} }

%% etc

\newcommand{\Syl}{\ensuremath{\mathrm{Syl}} }
\newcommand{\tr}{\ensuremath{\mathrm{tr}} }

\def\C{\ensuremath{\mathrm{C}} }
\def\J{\ensuremath{\mathrm{J}} }
\def\N{\ensuremath{\mathrm{N}} }
\def\O{\ensuremath{\mathrm{O}} }
\def\W{\ensuremath{\mathrm{W}} }
\def\Z{\ensuremath{\mathrm{Z}} }

\newcommand{\lar}[1]{\ar @{-}[#1]}
\newcommand{\larud}[1]{\ar @{-}@/^1pc/[#1]\ar @{-}@/_1pc/[#1]}
\newcommand{\darud}[1]{\ar @{--}@/^1pc/[#1]\ar @{--}@/_1pc/[#1]}

 %Included for Gather Purpose only:
        %input "I:\_Copy of CruzerSync 05-11-2008\C\Local Tex Files\my-tex-files\papers\trivial-fusion-systems\mybib.bib"

\begin{document}
\bibliographystyle{plain}

\title{Sparse Fusion Systems}
\author{Adam Glesser}
\date{\today}
\begin{abstract}
We define sparse saturated fusion systems and show that, for odd primes, sparse systems are constrained. This simplifies the proof of the Glauberman-Thompson $p$-nilpotency theorem for fusion systems and a related theorem of Stellmacher. We then define a more restrictive class of saturated fusion systems, called extremely sparse, that are constrained for all primes.
\end{abstract}
\maketitle

\section{Introduction}
For those concerned with fusion in finite groups, a $p$-nilpotent group is as trivial as it gets. As such, $p$-nilpotency criteria are inherently interesting. Following up his dissertation, John Thompson \cite{Thompson1964} proved, for odd primes $p$, a $p$-nilpotency criterion for finite groups that reduces the problem to a $p$-local question, namely, checking the fusion in two subgroups, $\N_G(\J(P))$ and $\C_G(\Z(P))$. Recently, D\'iaz, Mazza, Park and the author proved a generalization of this theorem for fusion systems (see \cite{DiazGlesserMazzaParkGT}). 

In \cite{KessarLinckelmann2008}, Kessar and Linckelmann state and prove a generalization to fusion systems of Glauberman's improved version of Thompson's $p$-nilpotency result, one that reduces the question of $p$-nilpotency in $G$ to that of the $p$-nilpotency of $
\N_G(\Z(\J(P)))$; motivated, presumably, by Gorenstein's treatment in \cite{Gorenstein1980}, this result is referred to as the \textit{Glauberman---Thompson $p$-nilpotency theorem}. 

In the present work, we aim to shorten the proof of this last result, showing that it is a consequence of the fusion system version of Thompson's $p$-nilpotency criterion and Glauberman's $p$-nilpotency criterion for groups. This simplification follows from the following observation: minimal counterexamples to statements	 whose conclusion is that a fusion system is trivial tend to only have the trivial subsystem as a proper subsystem on the same $p$-group. We call such a fusion system \textit{sparse}. In Section \ref{S:sparse fusion systems}, we prove that, for $p$ odd, any sparse fusion system is constrained, i.e., it contains a normal centric subgroup; this implies that the system comes from a finite group and, thus, is subject to fusion results for finite groups. For $p = 2$, this result does not hold in general and we will give an example of a fusion system exhibiting this deficiency.

To demonstrate the ubiquity of sparse fusion systems, we present several further examples. In Section \ref{ss:Navarro}, we generalize a result of Navarro to fusion systems. This result holds for all primes and  strengthens Navarro's original result. Furthermore, when applied to the situation of $p$-blocks, we obtain a new nilpotency criterion for blocks generalizing the classical result (see \cite{Kulshammer1980} and \cite{BrouePuig1980-2}) that a block with inertial index 1 and abelian defect group is nilpotent. In this section we also generalize a recent result (\cite{Weigel2009pre}) of Weigel on \textit{slim} $p$-groups. Finally, in Section \ref{S:extremely sparse fusion systems}, we consider saturated fusion systems for which a proper subsystem on \textit{any} $p$-subgroup is trivial. These \textit{extremely sparse} fusion systems are always constrained (even for $p =2$) and we give a classification of these systems along with an example of their use.

The genesis of this paper is a question posed by Radha Kessar asking for meta-theorems that decide when a result from group theory will hold for fusion systems. Consider this a very tiny first step in that direction.

\section{Preliminaries}
We begin with a brief summary of saturated fusion systems. The concept of a (saturated) fusion system is originally due to Puig (\cite{PuigUnpublished}) and the approach used in this paper is the one adopted by Broto, Levi and Oliver (\cite{BrotoLeviOliver2003}). For more detail, proper motivation or the proofs of any theorems given without justification, we refer the reader to \cite{Linckelmann2007}. 
\subsection*{Saturated Fusion Systems} Let $p$ be a prime, $P$ a finite $p$-group, $\F$ a category whose objects are the subgroups of $P$ and for $Q,R \leq P$, $\Hom_\F(Q,R)$ is a subset of the injective group homomorphisms from $Q$ to $R$. Composition of morphisms is given as the usual composition of group homomorphisms. Denote the $\F$-isomorphism class of $Q$ by $Q^\F$. 

\begin{enumerate}
\item A subgroup $Q \leq P$ is \textit{fully $\F$-normalized} (respectively, \textit{fully $\F$-centralized}) if $|\N_P(Q)| \geq |\N_P(R)|$ (respectively, $|\C_P(Q)| \geq |\C_P(R)|$) for all $R \in Q^\F$.

\item The category $\F$ is a \textit{saturated fusion system} on $P$ if the following hold for all subgroups $Q,R \leq P$:
\begin{enumerate}
\item If $Q \leq R$, then the inclusion map from $Q$ to $R$ is a morphism in $\F$.
\item If $\phi \in \Hom_\F(Q,R)$, then the induced isomorphism from $Q$ to $\phi(Q)$ and its inverse are also in $\F$.
\item $\Hom_P(Q,R) \subseteq \Hom_\F(Q,R)$ where the former set denotes the group homomorphisms from $Q$ to $R$ induced by conjugation with an element of $P$.
\item\label{d:Sylow axiom}(Sylow axiom) $\Aut_P(P)$ is a Sylow $p$-subgroup of $\Aut_\F(P)$
\item\label{d:extension axiom}(Extension axion) If $\phi \in \Hom_\F(Q,P)$ such that $\phi(Q)$ is fully $\F$-normalized, then $\phi$ extends to a morphism in $\Hom_\F(\N_\phi, P)$. Here, $\N_\phi$ denotes the inverse image in $\N_P(Q)$ of $\Aut_P(Q) \cap (\phi^{-1} \Aut_P(\phi(Q)) \phi)$.
\end{enumerate}
\end{enumerate}

\begin{exam}
Let $G$ be a finite group with Sylow $p$-subgroup $P$. For $g \in G$, let $c_g$ denote the automorphism of $G$ given by conjugation by $g$ and for $Q,R \leq P$, set
\[
\Hom_\F(Q,R) = \{c_g|_Q \ |\ g \in G, Q^g \leq R\}
\]
It is a straight forward application of the Sylow theorems to show that this gives a saturated fusion system on $P$  and we denote it by $\F_P(G)$. Recall that a group $G$ is $p$-\textit{nilpotent} if it has a normal $p$-complement, i.e, if $G = P O_{p'}(G)$. In the language of fusion systems, $G$ is $p$-nilpotent if and only if $\F_P(G) = \F_P(P)$. In general, we call a saturated fusion system $\F$ on $P$ \textit{trivial} if $\F = \F_P(P)$.
\end{exam}

Part of the motivation for the above definition is that it allows us to mimic quite a bit of local group theory, including normalizers, centralizers and Alperin's fusion theorem. 

\subsection*{Subsystems and Quotient Systems.} Let $\F$ be a saturated fusion system on a finite $p$-group P. A subcategory $\G$ of $\F$ is a a \textit{saturated subsystem} of $\F$ if there exists a subgroup $Q$ of $P$ such that $\G$ is a saturated fusion system on $Q$. Puig defined subcategories, corresponding to local subgroups in finite group theory, $\N_\F(Q)$, $\N_P(Q)\C_\F(Q)$ and $\C_\F(Q)$ whose objects are the subgroups of $\N_P(Q)$, $\N_P(Q)$, and $\C_P(Q)$, respectively and where for subgroups $R$ and $S$ of these respective groups,
\begin{align*}
\Hom_{\N_\F(Q)}(R,S) & = \{\phi \in \Hom_\F(R,S) \ |\ \exists\ \varphi \in \Hom_\F(QR,QS):\ \varphi|_R = \phi\} \\
\Hom_{\N_P(Q)\C_\F(Q)}(R,S) & = \{\phi \in \Hom_\F(R,S) \ |\ \exists\ \varphi \in \Hom_\F(QR,QS):\ \varphi|_R = \phi, \varphi|_Q \in \Aut_P(Q)\} \\
\Hom_{\C_\F(Q)}(R,S) & = \{\phi \in \Hom_\F(R,S) \ |\ \exists\ \varphi \in \Hom_\F(QR,QS):\ \varphi|_R = \phi, \varphi|_Q = \id_Q\}
\end{align*}
If $Q$ is fully $\F$-centralized, then $\N_P(Q)\C_\F(Q)$ and $\C_\F(Q)$ are saturated subsystems of $\F$. If $Q$ is fully $\F$-normalized, then $\N_\F(Q)$ is saturated. In the special case where $\N_\F(Q) = \F$, we say that $Q$ is \textit{normal} in $\F$ and write $Q \normal \F$. If $\C_\F(Q) = \F$, we say that $Q$ is \textit{central} in $\F$. When $Q$ is fully $\F$-normalized, all of the above systems are saturated and we get the following chain of saturated subsystems of $\F$
\[
\C_\F(Q) \subseteq \N_P(Q)\C_\F(Q) \subseteq \N_\F(Q) \subseteq \F.
\]
The largest normal and central subgroups of $\F$ are denoted by $\O_p(\F)$ and $\Z(\F)$, respectively. For more details and proofs, see \cite[\S 3]{Linckelmann2007}.

It may happen that a subgroup $Q$ is normal in $P$, but not normal in $\F$. In this context, there are a couple of gradations worth mentioning. If $Q$ is stabilized by every $\F$-morphism defined on $Q$, then $Q$ is called \textit{weakly $\F$-closed}. Furthermore, if the image, under any $\F$-morphism, of every subgroup of $Q$ remains in $Q$, then $Q$ is \textit{strongly $\F$-closed}. Therefore, if $Q \leq P$, then  
\[
Q \normal \F \Longrightarrow \text{$Q$ strongly $\F$-closed} \Longrightarrow \text{$Q$ weakly $\F$-closed} \Longrightarrow Q \normaleq P
\] 

When $Q$ is strongly $\F$-closed, Puig defined a category $\F/Q$ whose objects are the subgroups of $P/Q$ and whose morphisms are induced from $\F$ and stablize $Q$ and proved that it is a saturated fusion system. We omit a precise definition here as we will only need them in the context of the following proposition of Kessar and Linckelmann. For more details on $\F/Q$, we recommend the recent article \cite{Craven2010} of David Craven where several technical flaws in earlier treatments are overcome and $\F/Q$ is proven to be a saturated fusion system. 	
\\

Our goal in many cases is to reduce to the case where $\F = P\C_\F(Q)$ for some $Q \normaleq P$. To do so, we use the following result of Kessar and Linckelmann.

\begin{prop}(\cite[Proposition 3.4]{KessarLinckelmann2008})\label{KL3.4}
Let $\CG \subseteq \F$ be saturated fusion systems on a finite $p$-group $P$. If $Q$ and $R$ are normal subgroups of $P$ such that $Q \leq R$ and $\F = P\C_\F(Q)$, then 
\[
\text{$\CG = \N_\F(R)$ if and only if $\CG/Q = \N_{\F/Q}(R/Q)$}.
\]
\end{prop}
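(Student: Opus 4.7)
The plan is to use the hypothesis $\F = P\C_\F(Q)$, which forces $Q \normal \F$ and hence $Q$ strongly $\F$-closed, to set up a sufficiently faithful correspondence between saturated subsystems of $\F$ and of $\F/Q$. Under this hypothesis $Q$ is also strongly $\CG$-closed, so $\F/Q$ and $\CG/Q$ are saturated fusion systems on $P/Q$ with $\CG/Q \subseteq \F/Q$; and $\N_{\F/Q}(R/Q)$ is a saturated subsystem of $\F/Q$ since $R/Q \normaleq P/Q$. The key preliminary observation is that, because $Q \leq R$, an $\F$-morphism $\tilde\phi$ whose image in $\F/Q$ stabilizes $R/Q$ must itself stabilize $R$: one has $\tilde\phi(R)Q = R$, hence $\tilde\phi(R) \leq R$, and combined with $|\tilde\phi(R)| = |R|$ this yields $\tilde\phi(R) = R$. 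So stabilizing $R$ in $\F$ and stabilizing $R/Q$ in $\F/Q$ are equivalent under the quotient functor.

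For the forward implication, assume $\CG = \N_\F(R)$. A morphism $\phi \in \CG$ extends in $\F$ to $\tilde\phi$ fixing $R$, which descends to an extension of $\bar\phi$ in $\F/Q$ fixing $R/Q$, so $\CG/Q \subseteq \N_{\F/Q}(R/Q)$. Conversely, a morphism $\bar\psi \in \N_{\F/Q}(R/Q)$ lifts to some $\psi \in \F$ whose $\F/Q$-extension fixes $R/Q$; by the preliminary observation, the underlying $\F$-extension fixes $R$, placing $\psi \in \N_\F(R) = \CG$ and $\bar\psi \in \CG/Q$.

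For the backward implication, assume $\CG/Q = \N_{\F/Q}(R/Q)$. The inclusion $\CG \subseteq \N_\F(R)$ follows the same scheme: a $\CG$-morphism has image in $\CG/Q = \N_{\F/Q}(R/Q)$, which extends in $\F/Q$ to fix $R/Q$, and its lifted $\F$-extension fixes $R$ by the observation. The reverse inclusion $\N_\F(R) \subseteq \CG$ requires identifying a morphism $\phi \in \N_\F(R)$, whose image $\bar\phi$ lies in $\N_{\F/Q}(R/Q) = \CG/Q$ by the forward direction, as an element of $\CG$ itself. Here the hypothesis $\F = P\C_\F(Q)$ does the real work: it says every $\F$-morphism factors, after extending to include $Q$, as a $P$-inner map composed with a morphism in $\C_\F(Q)$, and two morphisms with equal image in $\F/Q$ differ by an automorphism inducing the identity modulo $Q$; such an automorphism, by the hypothesis, reduces to conjugation by an element of $\C_P(Q) \leq P$, which already lies in $\CG$ by saturation axiom~(c). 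Thus the lifting ambiguity is absorbed into $\CG$, and $\phi \in \CG$.

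The main obstacle is precisely this identification step: showing that the kernel of the quotient functor $\F \to \F/Q$, restricted to morphisms relevant for comparing $\CG$ with $\N_\F(R)$, is confined to $P$-inner automorphisms. Without the structural hypothesis $\F = P\C_\F(Q)$ the kernel could contain genuinely non-inner $\C_\F(Q)$-pieces that distinguish $\CG$ from $\N_\F(R)$ while being invisible in $\F/Q$; the hypothesis is tailor-made to eliminate exactly this obstruction, since it forces the $Q$-action of every $\F$-morphism to be a $P$-conjugation and thereby trivialises the quotient's kernel at the level of saturated subsystems containing all $P$-inner maps.
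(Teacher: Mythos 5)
First, a point of comparison: the paper does not actually prove this proposition --- it is imported verbatim from \cite[Proposition 3.4]{KessarLinckelmann2008} --- so there is no in-paper argument to measure yours against; I can only judge the mathematics. Your preliminary observation is correct, and the forward implication is essentially sound (indeed, as your argument implicitly shows, that direction never uses $\F = P\C_\F(Q)$: one always has $\N_\F(R)/Q = \N_{\F/Q}(R/Q)$ once $Q$ is strongly $\F$-closed and $Q \leq R \normaleq P$).

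The genuine gap is at the step you yourself flag as ``where the hypothesis does the real work,'' and the justification offered there is false. If $\phi \in \N_\F(R)$ and $\psi \in \CG$ induce the same morphism of $\F/Q$ on $X/Q$, then $\alpha = \psi^{-1}\circ\phi \in \Aut_\F(X)$ induces the identity on $X/Q$; but $\F = P\C_\F(Q)$ only tells you that $\alpha|_Q \in \Aut_P(Q)$ --- it does \emph{not} say that $\alpha$ itself is conjugation by an element of $\C_P(Q)$, nor even by an element of $P$ (and any such element would in addition have to normalize $X$, which you never arrange). The mechanism that actually controls this fiber is different: automorphisms of $X$ acting trivially on both $Q$ and $X/Q$ form a normal $p$-subgroup of $\Aut_\F(X)$, hence lie in $\O_p(\Aut_\F(X)) \leq \Aut_P(X)$ \emph{when $X$ is fully $\F$-normalized} (Sylow axiom). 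Exploiting this requires (i) first correcting $\alpha$ so that its restriction to $Q$ is trivial, using that $\phi|_Q$ and $\psi|_Q$ are both $P$-conjugations --- this is where $\F = P\C_\F(Q)$ genuinely enters; (ii) restricting attention to fully $\F$-normalized subgroups; and (iii) an appeal to Alperin's fusion theorem to reduce the equality $\CG = \N_\F(R)$ to automorphism groups of such subgroups, which also handles morphisms defined on subgroups not containing $Q$ --- a case your argument never treats, since such morphisms have no image in $\F/Q$. The same fiber-control problem infects your inclusion $\CG \subseteq \N_\F(R)$ as well: the lifted $\F$-extension you produce extends \emph{some} lift of $\bar\phi$, not necessarily $\phi$ itself. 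As written, the one sentence standing in for all of this is incorrect, so the crucial inclusion $\N_\F(R) \subseteq \CG$ is not established.
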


%gave the following method for producing saturated subsystems corresponding to local subgroups in finite group theory. Let $Q \leq P$ and $K \leq \Aut(Q)$. Define 
%\[
%\N_P^K(Q) = \{u \in \N_P(Q) \ |\ c_u|_Q \in K\}.
%\]
%Define a subcategory $\N^K_\F(Q)$ on the subgroups of $\N_P^K(Q)$ where for $R,S \leq \N_P^K(Q)$,
%\[
%\Hom_{\N_P^K(Q)}(R, S) = \{\phi \in \Hom_\F(R,S) \ |\ \exists\ \varphi \in \Hom_\F(QR, QS) \text{ such that } \varphi|_R = \phi, \varphi_Q \in K\}.
%\]
%Puig proved that if $|\N_P^K(Q)| \geq |\N_P^{^{\psi}K}(\psi(Q))|$ for every $\psi : Q \to P$ in $\F$, then $\N_\F^K(Q)$ is a saturated subsystem of $\F$. We will be interested in three particular cases. First, if $K = \Aut(Q)$ and $Q$ is fully $\F$-normalized, then we get the \textit{normalizer} of $Q$ in $\F$, a saturated subsystem on $\N_P(Q)$ and we write $\N_\F^{\Aut(Q)}(Q) = \N_\F(Q)$. If $K = \Aut_Q(Q)$ and $Q$ is fully $\F$-centralized, then we get a saturated subsystem on $Q\C_P(Q)$ and we will write $\N_F^{\Aut_Q(Q)}(Q) = Q\C_\F(Q)$. Finally, if $K = \{\id_Q\}$ and $Q$ is fully $\F$-centralized, then we get a saturated subsystem on $\C_P(Q)$, the \textit{centralizer} of $Q$ in $\F$, and we write $\N_\F^{\{\id_Q\}}(Q) = \C_\F(Q)$.

\subsection*{Alperin's Fusion Theorem}
The theorem we refer to here as Alperin's fusion theorem is a generalization to fusion systems of a theorem first proved by Alperin and improved upon by Goldschmidt and Puig. This version utilizes $\F$-essential subgroups, a class of subgroups that is, in some sense, minimal when it comes to generating fusion. Recall that a proper subgroup $H$ of a finite group $G$ is \textit{strongly $p$-embedded} if it contains a nontrivial Sylow $p$-subgroup $P$ of $G$ and $H \cap P^x = 1$ for any $x \in G \setminus H$. In particular, if $\O_p(G) \neq 1$, then $G$ has no strongly $p$-embedded subgroup. 

\begin{defn}
Let $\F$ be a saturated fusion system on a finite $p$-group $P$ and let $Q$ be a subgroup of $P$.
\begin{enumerate}
\item $Q$ is $\F$-\textit{centric} if $\C_P(R) \leq R$ for all $R \in Q^\F$.
\item $Q$ is $\F$-\textit{essential} if $Q$ is $\F$-centric and $\Out_\F(Q) = \Aut_\F(Q)/\Aut_Q(Q)$ has a strongly $p$-embedded subgroup. 
\end{enumerate}
\end{defn}

We start with a few useful trivialities and then a lesser known property of $\F$-essential subgroups and the extension axiom. 

\begin{lem}\label{l:essential lemma}
Let $\F$ be a saturated fusion system on a finite $p$-group $P$. 
\begin{enumerate}
\item\label{lp:P not essential} $P$ is not $\F$-essential.
\end{enumerate}
If $Q < P$ is $\F$-essential, then
\begin{enumerate}
\setcounter{enumi}{1}
\item\label{lp:p'-autos for essentials} $\Aut_\F(Q)$ is not a $p$-group. 
\item\label{lp:nocyclicessentials} $\Aut_\F(Q)$ does not have a normal Sylow $p$-subgroup. In particular, a cyclic $p$-group is never essential.
\item\label{lp:essential no extend} If $R \in Q^\F$ is a fully $\F$-normalized subgroup of $P$, then there exists $\varphi \in \Hom_\F(Q,R)$ such that $\N_\varphi = Q$.
\end{enumerate}
 
\end{lem}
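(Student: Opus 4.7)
The plan is to prove each part by passing to $\Out_\F(Q)=\Aut_\F(Q)/\Inn(Q)$ and exploiting the strongly $p$-embedded subgroup produced by essentiality. Parts (1)--(3) all amount to showing that the stated restriction on $\Aut_\F(Q)$ prevents $\Out_\F(Q)$ from admitting any strongly $p$-embedded subgroup, and should each be short.

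For (1), the Sylow axiom gives that $\Inn(P)=\Aut_P(P)$ is a Sylow $p$-subgroup of $\Aut_\F(P)$, so $|\Out_\F(P)|$ is coprime to $p$, and a strongly $p$-embedded subgroup requires a nontrivial Sylow $p$-subgroup. For (2), if $\Aut_\F(Q)$ were a $p$-group then so would $\Out_\F(Q)$, but a $p$-group is its own Sylow and so has no proper subgroup containing a Sylow. For (3), suppose $\Aut_\F(Q)$ has a normal Sylow $p$-subgroup $S$; uniqueness of the Sylow forces $\Inn(Q)\leq S$, so $S/\Inn(Q)$ is a normal Sylow $p$-subgroup of $\Out_\F(Q)$, nontrivial since essentiality guarantees that $\Out_\F(Q)$ has nontrivial Sylow $p$-subgroup. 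But if $\bar P\trianglelefteq\bar G$ is a nontrivial normal Sylow and $\bar H$ is strongly $p$-embedded, then $\bar H$ must contain $\bar P$, and for any $\bar x\notin\bar H$ we get $\bar P=\bar P^{\bar x}\leq\bar H\cap\bar P^{\bar x}=1$, a contradiction. The ``in particular'' follows because $\Aut(C_{p^n})\cong(\mathbb{Z}/p^n)^\times$ is abelian, so every subgroup of $\Aut_\F(C_{p^n})$ has a normal Sylow $p$-subgroup.

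For (4), fix an $\F$-isomorphism $\varphi_0\colon Q\to R$ and set $T=\varphi_0^{-1}\Aut_P(R)\varphi_0$. Because $R$ is fully $\F$-normalized, the Sylow axiom inside $\N_\F(R)$ makes $\Aut_P(R)$ a Sylow $p$-subgroup of $\Aut_\F(R)$, so $T$ is a Sylow $p$-subgroup of $\Aut_\F(Q)$. Any candidate morphism $Q\to R$ has the form $\varphi=\varphi_0\circ\alpha$ with $\alpha\in\Aut_\F(Q)$, and one computes
\[
\varphi^{-1}\Aut_P(R)\varphi=\alpha^{-1}T\alpha.
\]
Because $Q$ is centric, the condition $\N_\varphi=Q$ is equivalent to $\alpha^{-1}T\alpha\cap\Aut_P(Q)=\Inn(Q)$, and the inclusion ``$\supseteq$'' is automatic since $\Inn(Q)\leq T$ is normal in $\Aut_\F(Q)$. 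Writing $\bar T,\bar A$ for the images in $\Out_\F(Q)$ of $T,\Aut_P(Q)$, the task reduces to finding $\bar\alpha\in\Out_\F(Q)$ with $\bar T^{\bar\alpha}\cap\bar A=1$.

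By essentiality pick a strongly $p$-embedded subgroup $\bar H\leq\Out_\F(Q)$ with distinguished Sylow $\bar P_0\leq\bar H$. Since $\bar A$ is a $p$-subgroup, Sylow's theorem lets us replace $\bar H$ by a conjugate (still strongly $p$-embedded) so that $\bar A\leq\bar H$. For any $\bar x\in\Out_\F(Q)\setminus\bar H$ the defining property then yields $\bar P_0^{\bar x}\cap\bar A\subseteq\bar P_0^{\bar x}\cap\bar H=1$, and since $\bar T$ is Sylow we may choose $\bar\alpha$ with $\bar T^{\bar\alpha}=\bar P_0^{\bar x}$; lifting $\bar\alpha$ to $\alpha\in\Aut_\F(Q)$ and setting $\varphi=\varphi_0\circ\alpha$ finishes the proof. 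The main obstacle is the simultaneous bookkeeping here: one must arrange $\bar A$ to sit inside $\bar H$ (to invoke strong $p$-embedding) while keeping enough freedom to conjugate $\bar T$ to a Sylow outside $\bar H$, and this juggling is the only place in the lemma where the full strength of the strongly $p$-embedded hypothesis is used.
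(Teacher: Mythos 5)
Your proof is correct and takes essentially the same route as the paper's: parts (1)--(3) are argued identically by passing to $\Out_\F(Q)$, and for (4) the paper likewise post-composes a fixed isomorphism with an element of $\Aut_\F(Q)$ so that $\varphi^{-1}\Aut_P(R)\varphi$ becomes a Sylow $p$-subgroup meeting $\Aut_P(Q)$ only in $\Inn(Q)$, then pulls back along $c\colon \N_P(Q)\to\Aut_P(Q)$ using centricity. The only difference is presentational: you do the Sylow bookkeeping in $\Out_\F(Q)$ and explicitly derive the required trivial intersection from the strongly $p$-embedded subgroup, whereas the paper simply asserts the existence of $A,B\in\Syl_p(\Aut_\F(Q))$ with $A\cap B=\Aut_Q(Q)$.
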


\begin{proof}
The Sylow axiom in the definition of a saturated fusion system implies that $\Out_\F(P)$ is a $p'$-group and so, by definition, it cannot have a strongly $p$-embedded subgroup. Thus, $P$ is not $\F$-essential, proving (\ref{lp:P not essential}). By definition, a $p$-group cannot have a strongly $p$-embedded subgroup and so $\Out_\F(Q)$ is not a $p$-group. This implies the existence of a $p'$-automorphism for $Q$ in $\F$ proving (\ref{lp:p'-autos for essentials}). If $\Aut_\F(Q)$ has a normal Sylow $p$-subgroup, then (as it must contain $\Aut_P(Q)$) $\O_p(\Out_\F(Q)) \neq 1$; this is a contraction since it implies $\Out_\F(Q)$ has no strongly $p$-embedded subgroup. To prove (\ref{lp:essential no extend}), note that as $Q$ is $\F$-essential, we may choose $A, B \in \Syl_p(\Aut_\F(Q))$ such that $A \cap B = \Aut_Q(Q)$. Furthermore, without loss of generality, we assume that $\Aut_P(Q) \leq A$. Since $R$ is fully $\F$-normalized, $\Aut_P(R)$ is a Sylow $p$-subgroup of $\Aut_\F(R) \cong \Aut_\F(Q)$ and so if $\phi \in \Hom_\F(Q,R)$, then $\phi^{-1}\Aut_P(R)\phi \in \Syl_p(\Aut_\F(Q))$. Thus, there exists $\psi \in \Aut_\F(Q)$ such that $\psi^{-1}\phi^{-1}\Aut_P(R)\phi\psi = B$. Set $\varphi = \phi \psi$. 
If $c : \N_P(Q) \to \Aut_P(Q)$ denotes the homomorphism sending an element $u \in \N_P(Q)$ to the automorphism $c_u$ given by conjugation by $u$, then 
\[
\N_\varphi = c^{-1}(\Aut_P(Q) \cap \varphi^{-1}\Aut_P(R)\varphi ) \leq c^{-1}(A \cap B) = c^{-1}(\Aut_Q(Q)) = Q\C_P(Q) = Q
\]
where the last equality follows since $Q$ is $\F$-centric.
\end{proof}

The author wishes to thank Radu Stancu for suggesting part (\ref{lp:essential no extend}) of the above lemma and David Craven for his help in developing the above proof. Another important property of $\F$-essential subgroups is that they always contain $\O_p(\F)$.

\begin{prop}\cite[Proposition 1.6]{BrotoCastellanaGrodalLeviOliver2005}\label{p:O_p(F) in essentials}
Let $\F$ be a saturated fusion system on a finite $p$-group $P$. If $Q$ is an $\F$-essential subgroup of $P$, then $\O_p(\F) \leq Q$.
\end{prop}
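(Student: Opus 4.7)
The plan is to exploit two complementary pieces of information. First, Lemma \ref{l:essential lemma}(\ref{lp:essential no extend}) supplies an $\F$-morphism $\varphi$ out of $Q$ whose associated $\N_\varphi$ is only $Q$ itself. Second, the normality of $\O_p(\F)$ in $\F$ forces every $\F$-morphism defined on $Q$ to extend across $\O_p(\F) \cdot Q$. Comparing these two will pinch the normalizer of $Q$ inside $\O_p(\F) \cdot Q$ down to $Q$, and the normalizer condition in a $p$-group will then finish the argument.

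Concretely, set $S = \O_p(\F)$ and pick a fully $\F$-normalized $\F$-conjugate $R$ of $Q$. By Lemma \ref{l:essential lemma}(\ref{lp:essential no extend}) there exists $\varphi \in \Hom_\F(Q,R)$ with $\N_\varphi = Q$. Because $S \normal \F$ (so that $S$ is strongly $\F$-closed), $\varphi$ extends to an $\F$-morphism $\tilde\varphi \in \Hom_\F(SQ, SR)$ with $\tilde\varphi(S) = S$. Given any $x \in SQ$ that normalizes $Q$, the conjugation $c_x|_Q$ lies in $\Aut_P(Q)$, and the identity $\varphi \circ c_x|_Q \circ \varphi^{-1} = c_{\tilde\varphi(x)}|_R$, combined with $\tilde\varphi(x) \in P$, witnesses that $x \in \N_\varphi = Q$. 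Therefore $\N_{SQ}(Q) \leq Q$, and since the reverse inclusion is automatic, $\N_{SQ}(Q) = Q$. As $SQ$ is a $p$-group, the normalizer condition forces $SQ = Q$, i.e., $S \leq Q$.

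The only real obstacle is conceptual. The extension axiom alone does not preclude $\varphi$ from being extendable further than $\N_\varphi$; it gives only a lower bound on the domain of extension. The point to notice is that any genuine extension of $\varphi$ to a larger $p$-subgroup $A$ nevertheless feeds back into the constraint $A \cap \N_P(Q) \leq \N_\varphi$, so the identity $\N_\varphi = Q$ remains a rigid bound on all extensions. Once this is seen, the remaining ingredients (strong $\F$-closure of $\O_p(\F)$ and the $p$-group normalizer condition) are entirely routine.
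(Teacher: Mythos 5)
Your argument is correct. Note that the paper offers no proof of this proposition at all---it is quoted from \cite[Proposition 1.6]{BrotoCastellanaGrodalLeviOliver2005}---so there is no internal proof to compare against; judged on its own, every step checks out. Lemma \ref{l:essential lemma}(\ref{lp:essential no extend}) is proved independently of this proposition, so there is no circularity in invoking it. Writing $S = \O_p(\F)$, the normality $S \normal \F$ does give an extension $\tilde\varphi \in \Hom_\F(SQ,SR)$ of your $\varphi$, the identity $\varphi \circ c_x|_Q \circ \varphi^{-1} = c_{\tilde\varphi(x)}|_R$ with $\tilde\varphi(x) \in P$ normalizing $R$ correctly places every $x \in \N_{SQ}(Q)$ inside $\N_\varphi = Q$, and since $S \normaleq P$ makes $SQ$ a genuine $p$-subgroup, the normalizer condition forces $SQ = Q$. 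Your closing remark is also the right one: $\N_\varphi$ is an upper bound for the intersection with $\N_P(Q)$ of \emph{any} subgroup to which $\varphi$ extends, which is exactly the mechanism your computation exploits.

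For comparison, the usual proof of the cited result runs slightly differently: if $S \not\leq Q$ then $\N_{SQ}(Q) > Q$ by the normalizer condition, and since every $\alpha \in \Aut_\F(Q)$ extends to $SQ$ preserving $S$, the image of $\Aut_{\N_{SQ}(Q)}(Q)$ in $\Out_\F(Q)$ is a \emph{nontrivial normal} $p$-subgroup, contradicting the existence of a strongly $p$-embedded subgroup. That argument needs only that $Q$ is $\F$-centric and $\O_p(\Out_\F(Q)) = 1$ (i.e., $\F$-radical), so it proves a slightly more general statement; yours uses the full strength of essentiality through Lemma \ref{l:essential lemma}(\ref{lp:essential no extend}), which is exactly enough for the proposition as stated here and has the advantage of staying entirely within the toolkit the paper has already developed.
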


\noindent It is worth pointing out that, for nontrivial $P$, every $\F$-centric subgroup properly contains $\Z(P)$ so that, in fact, every $\F$-essential subgroup contains $\Z(P)\O_p(\F)$.

We now state Alperin's fusion theorem. Morally, it tells us that a saturated fusion system is determined by the $\F$-automorphisms of $P$ and the $\F$-essential subgroups of $P$.

\begin{thm}[Alperin's fusion theorem]\label{t:AFT} 
Let $\F$ be a saturated fusion system on a finite $p$-group $P$. Every $\F$-isomorphism is the composition of finitely many morphisms of the form $\phi : Q \to R$ where there exists $Q, R \leq S \leq P$ such that $S = P$ or $S$ is $\F$-essential and there exists $\alpha \in \Aut_\F(S)$ such that $\alpha|_Q = \phi$.
 
\end{thm}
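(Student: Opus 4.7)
My plan is to proceed by induction on the index $|P : Q|$ of the source. It is enough to treat $\F$-isomorphisms, for a general $\F$-morphism factors as an isomorphism onto its image composed with an inclusion, and inclusions are trivially in the desired form (take $S = P$ with $\alpha = \id_P$). The base case $|P:Q| = 1$ is immediate since $\phi \in \Aut_\F(P)$ is already of the required form with $S = P$. Before the inductive step proper, I would reduce to the case that the target is fully $\F$-normalized: given $\phi : Q \to R$, choose $R^* \in R^\F$ fully normalized and factor $\phi$ through any $\F$-isomorphism $R \to R^*$, so that it suffices to decompose $\F$-isomorphisms with fully normalized target (and, by inversion, those with fully normalized source).

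For the inductive step, let $\phi: Q \to Q^*$ be an $\F$-isomorphism with $Q < P$ and $Q^*$ fully $\F$-normalized. By the extension axiom, $\phi$ extends to $\tilde{\phi} : \N_\phi \to P$, where $Q \leq \N_\phi$. If $\N_\phi \supsetneq Q$, then $\tilde{\phi}$ has strictly larger source than $\phi$, so the inductive hypothesis decomposes $\tilde{\phi}$; restricting each factor to $Q$ (and the appropriate images) yields the required decomposition of $\phi$.

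The critical case is when $\N_\phi = Q$, and persists under replacing $\phi$ by $\phi \gamma$ for any $\gamma \in \Aut_\F(Q)$. First, $Q$ must be $\F$-centric: for any $x \in \C_P(Q)$, conjugation $c_x$ is trivial on $Q$ and so lies in every $p$-subgroup of $\Aut_\F(Q)$, giving $Q\C_P(Q) \leq \N_\phi$; hence $\C_P(Q) \leq Q$. Next, $Q$ must be fully $\F$-normalized: otherwise one could use Sylow's theorem inside $\Aut_\F(Q)$ to replace $\phi$ by $\phi\gamma$ so that the image of $\Aut_P(Q^*)$ contains $\Aut_P(Q)$, and since $Q$ is centric with $Q < P$ one has $\Aut_P(Q) > \Aut_Q(Q)$, giving $\N_{\phi\gamma} > Q$, contradicting the case assumption. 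With both $Q$ and $Q^*$ fully normalized, $A := \Aut_P(Q)$ and $B := \phi^{-1}\Aut_P(Q^*)\phi$ are two Sylow $p$-subgroups of $\Aut_\F(Q)$, and the hypothesis $\N_{\phi\gamma} = Q$ for every $\gamma \in \Aut_\F(Q)$ translates to $A \cap \gamma^{-1} B \gamma = \Aut_Q(Q)$ for every $\gamma$. Passing to $\Out_\F(Q) = \Aut_\F(Q)/\Aut_Q(Q)$, the image $\bar{A}$ is a nontrivial Sylow $p$-subgroup meeting each of its conjugates trivially, so $\N_{\Out_\F(Q)}(\bar{A})$ is a strongly $p$-embedded subgroup of $\Out_\F(Q)$. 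Thus $Q$ is $\F$-essential, and $\phi$ itself (an automorphism of $Q$ after the preliminary reductions, or composable with one via an automorphism of the $\F$-essential $Q$) is of the required form with $S = Q$.

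The main obstacle is the last case: showing that the persistent failure of the extension axiom to properly extend $\phi$ forces $Q$ to be $\F$-essential, which requires tracking the adjustments by $\Aut_\F(Q)$, verifying that the resulting Sylow intersections descend to a trivial intersection in $\Out_\F(Q)$, and recognizing this as yielding a strongly $p$-embedded subgroup. Careful bookkeeping is also needed to piece the decomposition of the modified morphism back together with the automorphism adjustments to recover a decomposition of the original $\phi$.
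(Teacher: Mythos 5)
The paper itself does not prove this theorem; it is quoted from the literature (the reader is referred to Linckelmann's survey for proofs of results stated without justification), so I am measuring your argument against the standard proof. Your overall strategy (induction on $|P:Q|$, the extension axiom, essentiality as the obstruction to extending) is the right one, but the case analysis has a genuine gap, and the case you analyze in detail is in fact empty. Concretely: with $Q < P$ and $Q^*$ fully $\F$-normalized, $B = \phi^{-1}\Aut_P(Q^*)\phi$ is a Sylow $p$-subgroup of $\Aut_\F(Q)$, so Sylow's theorem always produces $\gamma \in \Aut_\F(Q)$ with $\gamma\Aut_P(Q)\gamma^{-1} \leq B$, i.e.\ $\N_{\phi\gamma} = \N_P(Q) > Q$. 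This is exactly the argument you invoke to show $Q$ is fully normalized, but it does not depend on $Q$ failing to be fully normalized: it shows that your ``critical case'' ($\N_{\phi\gamma} = Q$ for every $\gamma$) never occurs when $Q < P$. The case that actually carries the content of the theorem is the one your write-up silently skips: $\N_\phi = Q$ but $\N_{\phi\gamma} > Q$ for some $\gamma$. There the induction decomposes $\phi\gamma$, but you are left owing a decomposition of the correction term $\gamma^{-1} \in \Aut_\F(Q)$, whose source has the same order as $Q$, so the inductive hypothesis does not apply. This is precisely where essentiality must enter: one needs the group-theoretic fact that if $\Out_\F(Q)$ has no strongly $p$-embedded subgroup, then it is generated by the normalizers of the nontrivial subgroups of its Sylow $p$-subgroup $\Aut_P(Q)/\Aut_Q(Q)$, and (using centricity of $Q$ together with the extension axiom) each element of such a normalizer extends to a subgroup strictly larger than $Q$, hence decomposes by induction; only when $Q$ is essential do genuinely non-extendable automorphisms survive, and those are exactly what the statement admits. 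None of this appears in your argument.

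A second, smaller defect: even granting your critical case, the conclusion that ``$\phi$ itself is of the required form with $S = Q$'' fails when $Q \neq Q^*$, since the statement requires both $Q$ and $\phi(Q)$ to lie in $S$. The standard proof avoids this by first moving both $Q$ and $Q^*$ onto a common fully normalized representative $R$ via morphisms defined on all of $\N_P(Q)$ and $\N_P(Q^*)$ (these exist by the same Sylow-plus-extension argument and decompose by induction, as $\N_P(Q) > Q$), thereby reducing the whole problem to decomposing elements of $\Aut_\F(R)$; you only normalize the target. Your centricity check likewise only verifies $\C_P(Q) \leq Q$ rather than the condition for all $\F$-conjugates of $Q$. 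I would restructure the induction around the reduction to $\Aut_\F(R)$ for $R$ fully normalized, and then supply the generation lemma for finite groups without strongly $p$-embedded subgroups; without that lemma the proof does not close.
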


By \cite[Proposition 2.10]{DiazGlesserMazzaParkGT}, a saturated fusion system $\F$ is generated by the $\F$-automorphisms of a set of representatives of the $\F$-isomorphism classes of $\F$-essential subgroups of $P$. This motivates the following definition.

\begin{defn}
Let $\F$ be a saturated fusion system on a finite $p$-group $P$. The \textit{essential rank} of $\F$ is the number of $\F$-isomorphism classes of $\F$-essential subgroups of $P$. The essential rank of $\F$ is denoted by $\rank_e(\F)$.
\end{defn}

The structure of a saturated fusion system $\F$ on a finite $p$-group $P$ with essential rank 0 is particularly straight forward. 

\begin{lem}\label{l:no essentials}
If $\F$ is a saturated fusion system on a finite $p$-group $P$, then the following are equivalent.
\begin{enumerate}
\item $P \normal \F$\label{lp:P <| F}
\item $\rank_e(\F) = 0$.\label{lp:rk_e(F)=0}
\item $\F = \F_P(P \rtimes \Out_\F(P))$.\label{lp:F=P.Out_F(P)}
\end{enumerate}
\end{lem}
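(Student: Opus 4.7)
The plan is to prove the cycle $(\ref{lp:P <| F}) \Rightarrow (\ref{lp:rk_e(F)=0}) \Rightarrow (\ref{lp:F=P.Out_F(P)}) \Rightarrow (\ref{lp:P <| F})$, with the first and third implications almost immediate from earlier results and the second supplying the real content. For $(\ref{lp:P <| F}) \Rightarrow (\ref{lp:rk_e(F)=0})$, if $P \normal \F$ then $\O_p(\F) = P$, so by Proposition~\ref{p:O_p(F) in essentials} any $\F$-essential subgroup $Q$ must satisfy $P \leq Q \leq P$ and hence $Q = P$; but Lemma~\ref{l:essential lemma}(\ref{lp:P not essential}) forbids $P$ itself from being $\F$-essential, so $\rank_e(\F) = 0$.

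For the central implication $(\ref{lp:rk_e(F)=0}) \Rightarrow (\ref{lp:F=P.Out_F(P)})$, the Sylow axiom makes $\Inn(P) = \Aut_P(P)$ a Sylow $p$-subgroup of $\Aut_\F(P)$, so $\Out_\F(P)$ is a $p'$-group and Schur--Zassenhaus produces a complement $K \cong \Out_\F(P)$ to $\Inn(P)$ inside $\Aut_\F(P)$. Letting $K$ act on $P$ via its inclusion in $\Aut(P)$ yields $G := P \rtimes \Out_\F(P)$, a finite group in which $P$ is a Sylow $p$-subgroup. The inclusion $\F_P(G) \subseteq \F$ is immediate, since the morphisms of $\F_P(G)$ are generated by conjugations by elements of $P$ (which are in $\F$ by axiom (c)) and restrictions of elements of $K \subseteq \Aut_\F(P)$. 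For the reverse inclusion I would invoke Alperin's fusion theorem (Theorem~\ref{t:AFT}): with no $\F$-essentials available, every $\F$-isomorphism is a composition of restrictions of elements of $\Aut_\F(P) = \Inn(P) \cdot K$, each of which is realized by conjugation by an element of $G$, giving $\F \subseteq \F_P(G)$.

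For $(\ref{lp:F=P.Out_F(P)}) \Rightarrow (\ref{lp:P <| F})$, the subgroup $P$ is normal in $G := P \rtimes \Out_\F(P)$ by construction, so any conjugation $c_g$ with $g \in G$ restricts to an automorphism of $P$ which lies in $\F_P(G) = \F$. Hence every morphism of $\F$ extends to an element of $\Aut_\F(P)$, which is precisely the assertion $\N_\F(P) = \F$, i.e., $P \normal \F$. The main obstacle lies in the middle implication, where one must exhibit a concrete finite group realizing $\F$; the key technical inputs are the $p'$-ness of $\Out_\F(P)$ (so the Schur--Zassenhaus splitting of $\Aut_\F(P)$ exists) and Alperin's theorem (so that, in the absence of essentials, the automorphisms of $P$ already generate all of $\F$). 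The remaining implications are essentially definitional bookkeeping on top of Proposition~\ref{p:O_p(F) in essentials} and Lemma~\ref{l:essential lemma}.
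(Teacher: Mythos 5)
Your proof is correct and takes essentially the same route as the paper: $(\ref{lp:P <| F}) \Rightarrow (\ref{lp:rk_e(F)=0})$ via Proposition~\ref{p:O_p(F) in essentials} and Lemma~\ref{l:essential lemma}(\ref{lp:P not essential}), the middle implication via Alperin's fusion theorem plus the Schur--Zassenhaus splitting of $\Aut_\F(P)$ (which the paper compresses into the remark that $\Aut_P(P) \in \Syl_p(\Aut_\F(P))$), and the immediate $(\ref{lp:F=P.Out_F(P)}) \Rightarrow (\ref{lp:P <| F})$. The only cosmetic difference is that you organize the argument as a cycle, whereas the paper first proves $(\ref{lp:P <| F}) \Leftrightarrow (\ref{lp:rk_e(F)=0})$ and then deduces $(\ref{lp:F=P.Out_F(P)})$.
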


\begin{proof}
If $P \normal \F$, then $\O_p(\F) = P$ and so, by Lemma \ref{l:essential lemma} and Proposition \ref{p:O_p(F) in essentials}, there are no $\F$-essential subgroups of $P$. Conversely, if there are no $\F$-essentials, then Alperin's fusion theorem implies that each $\F$-automorphism extends to $P$ and $P$ is normal in $\F$. This shows that (\ref{lp:P <| F}) is equivalent to (\ref{lp:rk_e(F)=0}). As $P$ is clearly normal in $\F_P(P \rtimes \Out_\F(P))$, it remains to show that (\ref{lp:P <| F}) implies (\ref{lp:F=P.Out_F(P)}). In this case, by (\ref{lp:rk_e(F)=0}) and Alperin's fusion theorem, $\F$ is generated by $\Aut_\F(P)$. The result now follows since $\Aut_P(P) \in \Syl_p(\Aut_\F(P))$. 

\end{proof}

\subsection*{Constrained Fusion Systems}
In \cite[Proposition 4.3]{BrotoCastellanaGrodalLeviOliver2005}, Broto, Castellana, Grodal, Levi and Oliver prove that if $\F$ is a saturated fusion system, then for every fully $\F$-normalized $\F$-centric subgroup $Q$ of $P$, there exists a unique finite group $G$ (up to isomorphism) such that $\O_{p'}(G) = 1$, $\C_{G}(\O_p(G)) \leq \O_p(G)$ and such that $\N_\F(Q) = \F_{\N_P(Q)}(G)$. In this case there is an exact sequence
\[
1 \to \Z(Q) \to G \to \Aut_\F(Q) \to 1
\] In particular, if $Q$ is normal in $\F$, then $\F = \F_P(G)$. In this case, where $\F$ has a normal $\F$-centric subgroup, $\F$ is called \emph{constrained}. By Lemma \ref{l:no essentials}, any saturated fusion system with essential rank 0 is constrained. The importance of being constrained is that it reduces some questions about fusion systems to questions about groups (see  \cite{DiazGlesserMazzaParkTr}, \cite{DiazGlesserMazzaParkGT} or \cite{KessarLinckelmann2008} for some recent examples).

	\section{Sparse Fusion Systems}\label{S:sparse fusion systems}

We begin by proving a useful lemma due to Onofrei and Stancu. This will help us reduce to the case where the fusion system is of the form $\F = P\C_\F(Q)$ for some $Q \normal \F$.
 
\begin{lem}\cite[Lemma 3.7]{OnofreiStancu2009}\label{Stancu}
 Let $\F$ be a fusion system on a finite $p$-group $P$, and let $Q \leq P$. If $Q \normal \F$, then
\[
 \F = \langle P\C_\F(Q), \N_\F(Q\C_P(Q)) \rangle.
\]

\end{lem}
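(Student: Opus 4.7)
The plan is to invoke Alperin's fusion theorem (Theorem \ref{t:AFT}) and reduce to showing that every $\alpha \in \Aut_\F(S)$ lies in the subsystem $\G := \langle P\C_\F(Q), \N_\F(C)\rangle$, where $C := Q\C_P(Q)$ and $S$ is either $P$ or an $\F$-essential subgroup of $P$. In both cases $Q \leq S$: trivially when $S = P$, and by Proposition \ref{p:O_p(F) in essentials} together with $Q \leq \O_p(\F)$ (which holds because $Q \normal \F$) when $S$ is $\F$-essential. Also, $Q$ is fully $\F$-normalized, since $Q \normaleq P$ gives $\N_P(Q) = P$ of maximal possible order, so the extension axiom is available at $Q$.

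The key move is to write $\alpha = \delta^{-1} \circ (\bar\alpha|_S)$ with $\bar\alpha|_S$ a morphism in $\N_\F(C)$ and $\delta$ a morphism in $\C_\F(Q) \subseteq P\C_\F(Q)$. To produce $\bar\alpha$, I would apply the extension axiom to $\alpha|_Q \in \Aut_\F(Q)$. First I verify that $\N_{\alpha|_Q}$ contains both $S$ and $\C_P(Q)$, and hence the subgroup $SC$: for $u \in S$, the identity $\alpha|_Q \circ c_u|_Q \circ (\alpha|_Q)^{-1} = c_{\alpha(u)}|_Q$ (valid because $\alpha$ is a homomorphism of $S$ stabilizing $Q$) exhibits the conjugated automorphism as an element of $\Aut_P(Q)$; for $u \in \C_P(Q)$, $c_u|_Q = \id_Q$. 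Consequently $\alpha|_Q$ extends to some $\bar\alpha : SC \to P$. A short commutator computation using $[y, Q] = 1 \implies [\bar\alpha(y), Q] = \bar\alpha([y, Q]) = 1$ (together with $\bar\alpha(Q) = Q$) shows $\bar\alpha(\C_P(Q)) \leq \C_P(Q)$, and equality then follows by comparing orders. Therefore $\bar\alpha(C) = C$, and the restriction $\bar\alpha|_S : S \to \bar\alpha(S)$ is a morphism in $\N_\F(C)$, witnessed by the extension $\bar\alpha : SC \to \bar\alpha(S)C$.

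For the other factor, set $\delta := (\bar\alpha|_S) \circ \alpha^{-1} : S \to \bar\alpha(S)$. Since $\bar\alpha|_Q = \alpha|_Q$, we have $\delta|_Q = \id_Q$; and because $Q \leq S$ and $Q = \bar\alpha(Q) \leq \bar\alpha(S)$, the equalities $QS = S$ and $Q\bar\alpha(S) = \bar\alpha(S)$ allow $\delta$ itself to serve as the extension required by the definition of $\C_\F(Q)$, so $\delta \in \C_\F(Q)$. The identity $\alpha = \delta^{-1} \circ (\bar\alpha|_S)$ then exhibits $\alpha$ as a composition of morphisms in $\G$; by Alperin's theorem (applied to $\F$-isomorphisms, together with the obvious fact that inclusions already lie in $\G$), this gives $\F = \G$.

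The only nontrivial verification is the containment $\N_{\alpha|_Q} \supseteq SC$, which is also the unique point at which the hypothesis $\alpha \in \Aut_\F(S)$ (rather than merely $\alpha|_Q \in \Aut_\F(Q)$) enters. This rests on the elementary identity $\alpha \circ c_u \circ \alpha^{-1} = c_{\alpha(u)}$ for $u \in S$. All remaining steps are formal consequences of the definitions of the subsystems $P\C_\F(Q)$ and $\N_\F(C)$ and of $Q \normal \F$.
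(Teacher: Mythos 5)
Your proof is correct and follows essentially the same route as the paper: both apply Alperin's fusion theorem, use the extension axiom to extend $\alpha|_Q$ to $SQ\C_P(Q)$ (using that this group lies in $\N_{\alpha|_Q}$), and then factor $\alpha$ as the composite of the extension's restriction (a morphism of $\N_\F(Q\C_P(Q))$, since the extension stabilizes $Q\C_P(Q)$) with a morphism restricting to $\id_Q$. The only quibble is notational: since $S$ need not lie in $\C_P(Q)$, your $\delta$ is properly a morphism of $P\C_\F(Q)$ (witnessed by $\delta|_Q = \id_Q \in \Aut_P(Q)$) rather than of $\C_\F(Q)$ itself, which is exactly how the paper phrases it; otherwise your write-up supplies details (e.g.\ $\bar\alpha(Q\C_P(Q)) = Q\C_P(Q)$) that the paper leaves implicit.
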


\begin{proof}
 Let $T$ be an $\F$-essential subgroup of $P$ and take $\varphi \in \Aut_\F(T)$.  As $Q$ is weakly $\F$-closed, we have $\theta = \varphi|_Q \in \Aut_\F(Q)$ and since $TQ\C_P(Q) \leq \N_\theta$, there is $\psi \in \Hom_\F(TQ\C_P(Q),P)$ such that $\psi|_Q = \varphi|_Q$.  Then
\[
    \varphi = (\varphi \circ (\psi|_T)^{-1}) \circ \psi|_T;
\]
$\varphi \circ (\psi|_T)^{-1}$ is a morphism in $P\C_\F(Q)$; $\psi|_T$ is a morphism in $\N_\F(Q\C_P(Q))$ because $\psi(Q\C_P(Q)) = Q\C_P(Q)$.  Thus $\varphi$ is a morphism in $\langle P\C_\F(Q), \N_\F(Q\C_P(Q)) \rangle$.  By Alperin's fusion theorem, it follows that $\F = \langle P\C_\F(Q), \N_\F(Q\C_P(Q)) \rangle$.
\end{proof}

\begin{prop}\label{F=PC_F(Q)2}
Let $\F$ be a saturated fusion system on a finite $p$-group $P$. If $Q$ and $R$ are normal subgroups of $P$ such that $Q \leq R$ and $\F = P\C_\F(Q)$, then 
\begin{enumerate}
\item $\N_\F(R)$ is trivial if and only if $\N_{\F/Q}(R/Q)$ is trivial. In particular, $\F$ is trivial if and only if $\F/Q$ is trivial.

\item $R \normal \F$ if and only if $R/Q \normal \F/Q$.
\end{enumerate}
\end{prop}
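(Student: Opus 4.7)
Both parts will be direct applications of Proposition~\ref{KL3.4} to suitably chosen saturated subsystems $\CG \subseteq \F$. Before invoking it I first want to verify that $\F/Q$ is even well-defined; that is, that $Q$ is strongly $\F$-closed under the hypothesis $\F = P\C_\F(Q)$. This falls right out of the definition of $P\C_\F(Q)$: any $\F$-morphism $\phi$ with domain $S \leq Q$ extends to some $\varphi$ on $QS = Q$ with $\varphi|_Q \in \Aut_P(Q)$, and hence $\phi(S) \leq \varphi(Q) = Q$.

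For (2), I would apply Proposition~\ref{KL3.4} with $\CG = \F$: its conclusion $\F = \N_\F(R) \Leftrightarrow \F/Q = \N_{\F/Q}(R/Q)$ is exactly $R \normal \F \Leftrightarrow R/Q \normal \F/Q$. For (1), I would take $\CG = \F_P(P)$; this is a saturated subsystem of $\F$ thanks to axiom~(c) in the definition of a saturated fusion system. A brief check shows $\F_P(P)/Q = \F_{P/Q}(P/Q)$, since conjugations by elements of $P$ descend to conjugations by elements of $P/Q$ in the quotient. Given that $R \normaleq P$ forces $\N_P(R) = P$, the notion ``$\N_\F(R)$ is trivial'' means exactly $\N_\F(R) = \F_P(P)$, and likewise for $\F/Q$; so Proposition~\ref{KL3.4} yields the desired biconditional.

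For the final ``in particular'' assertion, I would argue that $\F$ is trivial if and only if both $P \normal \F$ and $\Aut_\F(P) = \Aut_P(P)$ (and analogously for $\F/Q$); this follows from Lemma~\ref{l:no essentials} together with the identification $\Aut_P(P) = \Inn(P)$, noting that under $P \normal \F$ one has $\F = \F_P(P \rtimes \Out_\F(P))$, which collapses to $\F_P(P)$ precisely when $\Out_\F(P) = 1$. Specializing (2) to $R = P$ transfers the normality condition, and specializing (1) to $R = P$ transfers the second condition, since $\N_\F(P) = \F_P(P)$ is equivalent to $\Aut_\F(P) = \Aut_P(P)$. I expect no real obstacle in the argument; the only step that requires some care is the initial bookkeeping---confirming that $\F/Q$ is defined and that $\F_P(P)/Q$ genuinely equals $\F_{P/Q}(P/Q)$---after which the whole proposition is essentially a direct invocation of Proposition~\ref{KL3.4}.
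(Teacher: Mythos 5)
Your proposal is correct and matches the paper's proof, which likewise obtains (1) by applying Proposition~\ref{KL3.4} with $\CG = \F_P(P)$ and (2) with $\CG = \F$; the extra verifications you supply (that $Q$ is strongly $\F$-closed so $\F/Q$ is defined, and that $\F_P(P)/Q = \F_{P/Q}(P/Q)$) are exactly the bookkeeping the paper leaves implicit. For the ``in particular'' clause you could bypass Lemma~\ref{l:no essentials} entirely by taking $R = Q$ in (1), since $\F = P\C_\F(Q) \subseteq \N_\F(Q)$ forces $\N_\F(Q) = \F$ while $\N_{\F/Q}(Q/Q) = \F/Q$ trivially.
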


\begin{proof}
We obtain (1) by applying Proposition \ref{KL3.4} with $\CG = \F_P(P)$. Statement (2) is obtained when applying Proposition \ref{KL3.4} with $\CG = \F$.
\end{proof}

\begin{defn}
 A nontrivial saturated fusion system $\F$ on a finite $p$-group $P$ is called {\em sparse} if the only proper subfusion system of $\F$ on $P$ is $\F_P(P)$.
\end{defn}

The motivation for defining this class of fusion systems is that often a (putative) minimal counterexample to a theorem whose conclusion is that a saturated fusion system is trivial will be a sparse
  fusion system.  A tangible example of a sparse fusion system is the fusion system of $S_4$ on
   $D_8$. In the picture below, the fusion of $D_8$ on $D_8$ is
   described by the circled dots, while the additional fusion of
   $S_4$ on $D_8$ is described by the dashed line.
\[
\xymatrix{
    & & & & \bullet \lar{dll} \lar{d} \lar{drr} & & & \\
    & & \bullet \lar{dl} \lar{d} \lar{drr} & & \bullet \lar{d} & &
        \bullet \lar{dll} \lar{d} \lar{dr} & \\
    \darud{rrrrr} \larud{rrr} & \bullet \lar{drrr} & \bullet
        \lar{drr} & & \bullet \lar{d} & \larud{rrr} & \bullet
        \lar{dll} & \bullet \lar{dlll} & \\
    & & & & \bullet & & &}
\]
Sparse fusion systems are necessarily ubiquitous objects. Take any nontrivial saturated fusion system $\F$ on a finite $p$-group $P$ and consider the lattice of subsystems of $\F$ on $P$. Any minimal nontrivial subsystem in this lattice is a sparse fusion system. In particular, for any finite $p$-group on which there is at least one nontrivial saturated fusion system, there is a sparse fusion system on that $p$-group.

%One easy class of examples is $\F_P(G)$ when $G$ is a finite group with abelian Sylow $p$-subgroup $P$ such that $\N_G(P)/\C_G(P)$ has order a prime not equal to $p$.

For a finite $p$-group $P$, the Thompson subgroup, $\J(P)$, is the subgroup of $P$ generated by the abelian subgroups of $P$ of maximal order. A classical result of Thompson is that a group $G$ with Sylow $p$-subgroup $P$ is $p$-nilpotent if and only if $\N_G(\J(P))$ and $\C_G(\Z(P))$ are $p$-nilpotent. This was recently extended to saturated fusion systems in \cite{DiazGlesserMazzaParkGT}. Recall from \cite{KessarLinckelmann2008} that a saturated fusion system is $S_4$-free if all of the groups arising from the normalizers of $\F$-centric, fully $\F$-normalized subgroups of $P$, as in \cite[Proposition 4.3]{BrotoCastellanaGrodalLeviOliver2005}, are $S_4$-free.
	
\begin{thm}\cite[Theorem 4.5]{DiazGlesserMazzaParkGT}
Let $\F$ be a saturated fusion system on a finite $p$-group $P$ where $p$ is odd or $\F$ is $S_4$-free. If $\N_\F(\J(P)) = \F_P(P) = \C_\F(\Z(P))$, then $\F = \F_P(P)$.
\end{thm}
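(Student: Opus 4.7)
The plan is to take a minimal counterexample $\F$: a nontrivial saturated fusion system on $P$ satisfying the hypotheses, chosen so that every proper saturated subsystem of $\F$ on $P$ fails to be a counterexample. Any proper subsystem $\CG$ of $\F$ on $P$ automatically inherits the hypotheses, since $\N_\CG(\J(P)) \subseteq \N_\F(\J(P)) = \F_P(P)$ and $\C_\CG(\Z(P)) \subseteq \C_\F(\Z(P)) = \F_P(P)$; minimality then forces $\CG = \F_P(P)$, and so $\F$ is sparse.

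Assuming $p$ is odd, I would then apply the main result of Section~\ref{S:sparse fusion systems}, according to which every sparse saturated fusion system at an odd prime is constrained. Thus there is a finite group $G$ with Sylow $p$-subgroup $P$, with $\O_{p'}(G) = 1$ and $\C_G(\O_p(G)) \leq \O_p(G)$, such that $\F = \F_P(G)$. The subgroups $\J(P)$ and $\Z(P)$ are characteristic in $P$, hence fully $\F$-normalized, and one has $\N_\F(\J(P)) = \F_{\N_P(\J(P))}(\N_G(\J(P)))$ and $\C_\F(\Z(P)) = \F_P(\C_G(\Z(P)))$. Triviality of both subsystems therefore forces $\N_G(\J(P))$ and $\C_G(\Z(P))$ to be $p$-nilpotent. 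The classical Thompson $p$-nilpotency theorem then yields that $G$ itself is $p$-nilpotent, so $G = P\,\O_{p'}(G) = P$ and $\F = \F_P(P)$, contradicting that $\F$ was a counterexample.

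The case $p = 2$ with $\F$ being $S_4$-free requires separate care, since sparse fusion systems at $p = 2$ need not be constrained (as the paper explicitly exhibits). A natural route here is to establish the analogous statement that sparse $S_4$-free fusion systems at $p = 2$ are constrained, or alternatively to proceed by induction on $|P|$, using the quotient system tools in Proposition~\ref{F=PC_F(Q)2} and Lemma~\ref{Stancu} to reduce to a setting in which a constrained realization $\F = \F_P(G)$ is available (with the $S_4$-free property transferring to $G$). Once that is in hand, the same translation and the $S_4$-free version of Thompson's classical theorem close the argument in exactly the same manner as above.

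The main obstacle is the sparse-implies-constrained assertion, which is the technical heart of Section~\ref{S:sparse fusion systems}; the failure of this implication at $p = 2$ is precisely why the $S_4$-free hypothesis must be imposed there. Once constrainedness is secured, the Broto--Castellana--Grodal--Levi--Oliver structure theorem converts the fusion-theoretic statement into a group-theoretic one, after which the classical Thompson $p$-nilpotency theorem applies without any new fusion-system input.
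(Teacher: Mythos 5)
Your argument is circular within the logical architecture of this paper. The statement you are asked to prove is \cite[Theorem 4.5]{DiazGlesserMazzaParkGT}, which this paper does not prove at all: it is imported as an external ingredient, and it is invoked twice in the proof of Theorem \ref{t:sparse} to establish that sparse fusion systems (for $p$ odd or $\F$ being $S_4$-free) are constrained --- first to show $\O_p(\F) \neq 1$, and again after passing to $\F/Q$ to contradict the maximality of $Q = \O_p(\F)$. Your proposal runs the implication in the opposite direction: you take a minimal counterexample, observe it is sparse, and then invoke ``sparse implies constrained'' to realize $\F$ by a finite group and finish with Thompson's classical criterion. Since the only available proof of ``sparse implies constrained'' already presupposes the theorem you are proving, this establishes nothing. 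The genuine proof in \cite{DiazGlesserMazzaParkGT} must (and does) proceed by a direct fusion-theoretic argument --- working with $\F$-essential subgroups, the Thompson subgroup, and the models of normalizers of centric subgroups --- without assuming any general constraint result for sparse systems.

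What you have written is, almost verbatim, this paper's proof of a \emph{different} theorem: the Glauberman--Thompson $\Z(\J(P))$-theorem in Section 4 (and, in the same spirit, Theorem \ref{Navarrofusion}). There the strategy ``minimal counterexample $\Rightarrow$ sparse $\Rightarrow$ constrained $\Rightarrow$ reduce to the group-theoretic statement'' is legitimate precisely because the $\Z(\J(P))$-theorem is not itself an input to Theorem \ref{t:sparse}. The lesson is that this reduction scheme applies only to results downstream of the fusion-system Thompson criterion, not to that criterion itself. (Your treatment of the $p=2$, $S_4$-free case is also only a sketch --- you essentially defer to an unproved analogue of the constraint result --- but that is secondary to the circularity.)
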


\noindent We use this result to detect constraint in sparse fusion systems.

\begin{thm}\label{t:sparse}
Let $\F$ be a sparse fusion system on a finite $p$-group $P$.
\begin{enumerate}

 \item Let $Q \leq P$ such that $Q \normal \F$. If $Q\C_P(Q)$ is not normal in $\F$, then $\F = P\C_\F(Q)$. In this case, $Q \cap \Z(P) \leq \Z(\F)$.

 \item If $p$ is odd or $\F$ is $S_4$-free, then $\F$ is constrained.

\end{enumerate}

\end{thm}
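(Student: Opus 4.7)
The plan for part~(1) is a direct application of Lemma~\ref{Stancu}. Since $Q \normal \F$ forces $Q \normaleq P$, and $\C_P(Q) \normaleq P$ as well, the product $Q\C_P(Q)$ is normal in $P$, hence fully $\F$-normalized; therefore $\N_\F(Q\C_P(Q))$ is a saturated subsystem of $\F$ on $P$. The hypothesis makes this a proper subsystem of $\F$, and sparseness forces it to equal $\F_P(P) \subseteq P\C_\F(Q)$. Lemma~\ref{Stancu} then gives $\F = P\C_\F(Q)$. For the second assertion, take $z \in Q \cap \Z(P)$ and any $\phi \in \Hom_\F(R,S)$. From $\F = P\C_\F(Q)$, there exists $\varphi \in \Hom_\F(QR,QS)$ with $\varphi|_R = \phi$ and $\varphi|_Q = c_u$ for some $u \in P$; since $z \in \Z(P)$ we have $c_u(z) = z$, so restricting $\varphi$ to $\langle z \rangle R$ produces an extension of $\phi$ fixing $\langle z \rangle$ pointwise, and hence $Q \cap \Z(P) \leq \Z(\F)$.

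For part~(2) I would induct on $|P|$. The first step is to observe that $\O_p(\F) \neq 1$. Since $\F$ is nontrivial, Theorem~4.5 of \cite{DiazGlesserMazzaParkGT} guarantees that at least one of the saturated subsystems $\N_\F(\J(P))$ and $\C_\F(\Z(P))$ on $P$ is nontrivial, and by sparseness that subsystem must equal $\F$; hence $\J(P) \normal \F$ or $\Z(P) \leq \Z(\F)$. Using $\Z(\F) \leq \O_p(\F)$, this shows $\O_p(\F) \neq 1$ in either case.

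Set $Q = \O_p(\F)$. If $\C_P(Q) \leq Q$, then $Q$ is itself a normal $\F$-centric subgroup and we are done. Otherwise $Q\C_P(Q) > Q$, and maximality of $\O_p(\F)$ forces $Q\C_P(Q) \not\normal \F$; part~(1) then applies and gives $\F = P\C_\F(Q)$. I would next pass to the quotient $\F/Q$ on $P/Q$, which is a saturated fusion system by \cite{Craven2010}, is strictly smaller, and inherits the hypothesis ``$p$ odd or $S_4$-free''. Assuming $\F/Q$ is sparse, the inductive hypothesis produces a normal $\F/Q$-centric subgroup $\bar{R} \normal \F/Q$. Its preimage $R \leq P$ contains $Q$, is normal in $\F$ by Proposition~\ref{F=PC_F(Q)2}(2), and is $\F$-centric: any $x \in \C_P(R)$ projects to $\bar{x} \in \C_{P/Q}(\bar{R}) \leq \bar{R}$, forcing $x \in R$. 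So $\F$ is constrained.

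The main obstacle is verifying that $\F/Q$ is sparse. The argument I have in mind is by lifting: any proper nontrivial saturated subsystem of $\F/Q$ on $P/Q$ should correspond to a proper nontrivial saturated subsystem of $\F$ on $P$, directly contradicting the sparseness of $\F$. Making this correspondence rigorous is what I expect to require the most care, and it should be extractable from the detailed analysis of quotient fusion systems in \cite{Craven2010}. A minor secondary check is that the $S_4$-free hypothesis passes from $\F$ to $\F/Q$, which reduces to the standard fact that quotients of $S_4$-free groups are $S_4$-free, applied to the groups of \cite[Proposition~4.3]{BrotoCastellanaGrodalLeviOliver2005} arising in $\F$ and in $\F/Q$.
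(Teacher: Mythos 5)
Your part (1) is correct and takes the paper's route: since $Q\C_P(Q)\normaleq P$, the normalizer $\N_\F(Q\C_P(Q))$ is a saturated subsystem of $\F$ on $P$, sparseness forces it to be $\F_P(P)$, and Lemma \ref{Stancu} collapses the generation to $\F = P\C_\F(Q)$. Your verification that $Q\cap\Z(P)\leq\Z(\F)$ (the extension $\varphi$ restricts to an inner automorphism of $Q$, hence fixes $Q\cap\Z(P)$ pointwise) correctly supplies a detail the paper leaves implicit. The opening of part (2) also matches the paper: $\O_p(\F)\neq 1$ via \cite[Theorem 4.5]{DiazGlesserMazzaParkGT} and sparseness, then either $Q=\O_p(\F)$ is $\F$-centric and we are done, or part (1) gives $\F=P\C_\F(Q)$.

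The gap is exactly where you flagged it: your induction needs $\F/Q$ to be sparse, and the lifting you invoke --- that every proper nontrivial saturated subsystem of $\F/Q$ on $P/Q$ is the image of one of $\F$ on $P$ --- is not established anywhere; neither \cite{Craven2010} nor the quotient construction produces a saturated preimage of an arbitrary saturated subsystem, and the only instance of such a correspondence available in this paper is Proposition \ref{KL3.4}, which treats normalizer subsystems only. The paper's proof shows how to avoid the induction (and hence the sparseness of $\F/Q$) entirely. Having reached $\F = P\C_\F(Q)$ with $Q=\O_p(\F)$ not $\F$-centric, note $\F/Q$ is nontrivial (else $\F$ is trivial by Proposition \ref{F=PC_F(Q)2}) and inherits $S_4$-freeness; applying \cite[Theorem 4.5]{DiazGlesserMazzaParkGT} to $\F/Q$ yields a normal subgroup $R\normaleq P$ with $R>Q$ (namely the preimage of $\Z(P/Q)$ or $\J(P/Q)$) such that $\N_{\F/Q}(R/Q)$ is nontrivial. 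Proposition \ref{KL3.4} --- precisely the normalizer-subsystem substitute for your lifting --- then gives that $\N_\F(R)$ is nontrivial, and sparseness of $\F$ \emph{itself}, not of $\F/Q$, forces $R\normal\F$, contradicting the maximality of $Q=\O_p(\F)$. Thus the conclusion follows from a one-step contradiction, and no inductive hypothesis or quotient sparseness is ever required. If you want to keep your inductive scheme, you would have to prove the lifting statement, which is a nontrivial assertion about quotients of fusion systems and likely false at this level of generality.
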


\begin{proof}
If $Q\C_P(Q)$ is not normal in $\F$ and $\F$ is sparse, we
have $\N_\F(Q\C_P(Q)) = \F_P(P)$. By Lemma \ref{Stancu}, $\F = P\C_\F(Q)$, proving (1).
If $\Z(P)$ and $\J(P)$ are not normal in $\F$, then---as $\F$ is sparse---their $\F$-normalizers are all trivial. By \cite[Theorem 4.5]{DiazGlesserMazzaParkGT}, we conclude $\F = \F_P(P)$,
 a contradiction proving that $\O_p(\F) \neq 1$. Set $Q = \O_p(\F)$. If $Q$ is $\F$-centric, then $\F$ is constrained. Therefore, we assume that $Q$ is not $\F$-centric. This implies
that $Q$ is a proper subgroup of $Q\C_P(Q)$ and, as $Q = \O_p(F)$, we have that
$Q\C_P(Q)$ is not normal in $\F$. Applying (1), we get $\F = P\C_\F(Q)$. If $\F/Q$ is trivial, then so is $\F$ by Proposition \ref{F=PC_F(Q)2}, giving a contradiction. If $\F$ is $S_4$-free, then \cite[Proposition 6.3]{KessarLinckelmann2008} implies that $\F/Q$ is also $S_4$-free. Thus, by \cite[Theorem
4.5]{DiazGlesserMazzaParkGT}, at least one of $\C_\F(\Z(P/Q))$ and $\N_\F(\J(P/Q))$ is
not trivial, so we may assume that there is a normal subgroup $R$ of $P$ properly containing $Q$ such that $\N_{\F/Q}(R/Q)$ is not trivial. Utilizing Proposition \ref{KL3.4} again, $\N_\F(R)$ is not trivial and so, since $\F$ is sparse, $R \normal \F$, contradicting the maximality of $Q$. This proves (2).
\end{proof}

In \cite{Craven2010}, a saturated fusion system $\F$ on a finite $p$-group $P$ is called \textit{$p$-solvable} if there exists a chain of strongly $\F$-closed subgroups $1 = P_0 \leq P_1 \leq \cdots \leq P_n = P$ such that $P_i/P_{i-1} \leq \O_p(\F/P_{i-1})$ for all $1 \leq i \leq n$. When such a chain exists, the length of a minimal possible chain satisfying the above is called the \textit{$p$-length} of $\F$. It is easy to see from the above proof that a sparse fusion system is $p$-solvable with $p$-length 2 when $p$ is odd or the fusion system is $S_4$-free.

A careful reading of the proofs of Lemma \ref{Stancu} and Theorem \ref{t:sparse} show that a slightly weaker condition on $Q$ will suffice, namely we only require that $Q$ be a weakly $\F$-closed subgroup contained in every subgroup $T$ of some conjugation family for $\F$. For example, if $Q \normal \F$, then by Proposition \ref{p:O_p(F) in essentials}, $Q$ is contained in every $\F$-essential subgroup.

Note that the statement of Theorem \ref{t:sparse}(2) only considers the case where $\F$ is $S_4$-free when $p=2$, but that the result holds for $\F_{D_8}(S_4)$. This led us, in an earlier version of this paper, to conjecture that all sparse fusion systems are constrained. However, David Craven pointed out a family of counterexamples to this conjecture. Take, for example, the fusion system on $D_{16}$ afforded by $\mathrm{PGL}(2,7)$. This system is easily seen to be sparse (see \cite[Example 8.8]{Linckelmann2007} for details) and not constrained. It is still an open question as to whether there exist any sparse exotic fusion systems.

\section{$p$-Nilpotency Criterion}
\subsection*{$\Z\J$ and the Stellmacher Functor}\label{ss:ZJ and W}
We use Theorem \ref{t:sparse} to give a streamlined version of the proofs of \cite[Theorem A]{KessarLinckelmann2008} and of \cite[Theorem 1.3]{OnofreiStancu2009}. 

\begin{thm}
 Let $p$ be an odd prime and let $\F$ be a saturated fusion system on a finite $p$-group $P$. The following are equivalent:
 \begin{enumerate}
 \item $\F = \F_P(P)$
 
 \item $\N_\F(\Z(\J(P))) = \F_P(P)$
 
 \item $\N_\F(\W(P)) = \F_P(P)$.
 \end{enumerate}

Here $\J(P)$ denotes the Thompson subgroup of $P$ and and $\W$ denotes the Stellmacher functor (as in \cite{Stellmacher1996}).
\end{thm}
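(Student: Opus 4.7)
The plan is to prove this by contradiction using the sparse-systems machinery of Theorem \ref{t:sparse}(2), which will reduce the question to the classical Glauberman--Thompson $p$-nilpotency theorem (for $(2) \Rightarrow (1)$) and to Stellmacher's $p$-nilpotency criterion from \cite{Stellmacher1996} (for $(3) \Rightarrow (1)$) for finite groups.

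The directions $(1) \Rightarrow (2)$ and $(1) \Rightarrow (3)$ are immediate. For $(2) \Rightarrow (1)$, we argue by contradiction: suppose $\F \neq \F_P(P)$ while $\N_\F(\Z(\J(P))) = \F_P(P)$. The first step is to pass to a sparse subsystem. Consider the (finite, nonempty) collection of nontrivial saturated subsystems $\G$ of $\F$ on $P$ with $\N_\G(\Z(\J(P))) = \F_P(P)$, and choose one that is minimal under inclusion. If $\G' \subsetneq \G$ were a proper nontrivial saturated subsystem on $P$, then the inclusions
\[
\F_P(P) \;\subseteq\; \N_{\G'}(\Z(\J(P))) \;\subseteq\; \N_\G(\Z(\J(P))) \;=\; \F_P(P)
\]
(the left one holding because $\Z(\J(P))$ is characteristic in $P$) would force $\G'$ to satisfy the same hypothesis, contradicting minimality. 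Hence $\G$ is sparse.

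Next we apply the main tool from the previous section: since $p$ is odd, Theorem \ref{t:sparse}(2) gives that $\G$ is constrained, and \cite[Proposition 4.3]{BrotoCastellanaGrodalLeviOliver2005} then produces a finite group $G$ with $P \in \Syl_p(G)$, $\O_{p'}(G) = 1$ and $\C_G(\O_p(G)) \leq \O_p(G)$, such that $\G = \F_P(G)$. Because $\Z(\J(P))$ is characteristic in $P$, it is normal in $P$ and hence fully $\G$-normalized, so that $\N_\G(\Z(\J(P))) = \F_P(\N_G(\Z(\J(P))))$; the hypothesis then translates into $\N_G(\Z(\J(P)))$ being $p$-nilpotent. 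The classical Glauberman--Thompson theorem forces $G$ itself to be $p$-nilpotent, and together with $\O_{p'}(G) = 1$ this gives $G = P$, whence $\G = \F_P(P)$, a contradiction. The implication $(3) \Rightarrow (1)$ goes through verbatim, substituting the characteristic subgroup $\W(P)$ for $\Z(\J(P))$ and Stellmacher's theorem for Glauberman's.

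The main obstacle---and indeed the entire fusion-theoretic content of the argument---lies in the passage from the combinatorial condition \emph{sparse} to the structural condition \emph{constrained}, which is supplied by Theorem \ref{t:sparse}(2). Once one is inside a bona fide finite group $G$, the classical Glauberman--Thompson and Stellmacher criteria apply directly, which is precisely the streamlining over the proofs in \cite{KessarLinckelmann2008} and \cite{OnofreiStancu2009} advertised in the introduction.
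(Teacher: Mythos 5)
Your proof is correct and follows essentially the same route as the paper: reduce to a sparse system via the observation that $\N_\CG(\Z(\J(P))) \subseteq \N_\F(\Z(\J(P)))$ for subsystems $\CG$ on $P$, invoke Theorem \ref{t:sparse}(2) to get constraint, realize the system as $\F_P(G)$, and apply the classical Glauberman--Thompson (resp.\ Stellmacher) criterion. The only difference is cosmetic: the paper takes a global minimal counterexample with respect to the number of morphisms, whereas you pass to a minimal nontrivial subsystem of the given $\F$ on $P$ satisfying the hypothesis; both yield the same sparse system argument.
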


\begin{proof}
It is clear that (1) implies both (2) and (3). We will prove (2) implies (1) and the proof that (3) implies (1) will be the same almost verbatim (referring, of course, to Stellmacher's result instead).
Let $\F$ be a minimal counterexample with respect to the number $|\F|$
 of morphisms in $\F$. If $\CG$ is a proper subfusion system of $\F$ on $P$,
  then $\N_\CG(\Z(\J(P))) \subseteq \N_\F(\Z(\J(P))) = \F_P(P)$ and so by
   the minimality of $\F$, we have $\CG = \F_P(P)$. In
    particular, $\F$ is sparse. By Theorem \ref{t:sparse}, $\F$ is constrained and so by \cite[Proposition 4.3]{BrotoCastellanaGrodalLeviOliver2005}
       there exists a finite group $G$ with Sylow $p$-subgroup $P$ such that
        $\F = \F_P(G)$. This implies $\F_P(\N_G(\Z(\J(P)))) = \N_\F(\Z(\J(P)))
         = \F_P(P)$. By Glauberman's and Thompson's $p$-nilpotency theorem
          for groups, $\F = \F_P(G) = \F_P(P)$, a contradiction.
\end{proof}

\subsection*{A Theorem of Navarro}\label{ss:Navarro} We now generalize a result of Navarro and then translate it to fusion systems. In the following, $P'$ denotes the derived subgroup of $P$, i.e., the smallest normal subgroup of $P$ with abelian quotient, and $\Phi(P)$ denotes the Frattini subgroup of $P$, i.e., the smallest normal subgroup of $P$ with elementary abelian quotient.  

\begin{thm}[Navarro]\label{t:Navarro}
 Let $G$ be a finite group with Sylow $p$-subgroup $P$. If $\N_G(P)$ is $p$-nilpotent, then $\N_G(Q)$ is $p$-nilpotent for every subgroup $P' \leq Q \leq \Phi(P)$.
\end{thm}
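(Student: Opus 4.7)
The plan is to set $H = \N_G(Q)$ and prove $H$ is $p$-nilpotent via a two-stage argument. The two hypotheses on $Q$ play complementary roles: $P' \leq Q$ forces $P/Q$ to be abelian, which triggers Burnside's transfer criterion in the quotient $\bar H = H/Q$, while $Q \leq \Phi(P)$ is exactly the condition needed to then lift $p$-nilpotency from $\bar H$ back to $H$. I would first note that $[P,Q] \leq P' \leq Q$ gives $Q \normaleq P$, so $P \leq H$ and therefore $P \in \Syl_p(H)$ with $Q \normaleq H$.

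For Stage 1 (showing $\bar H$ is $p$-nilpotent), I would exploit that $\N_G(P)$ is $p$-nilpotent with $P$ its own normal Sylow $p$-subgroup to write $\N_G(P) = P \times O_{p'}(\N_G(P))$ as a direct product; in particular, $\N_G(P) = P\C_G(P)$. Writing $\bar P = P/Q$, this yields
\[
\N_{\bar H}(\bar P) = \N_H(P)/Q = \bigl(P\C_G(P) \cap H\bigr)/Q = \bar P \cdot \overline{\C_H(P)}.
\]
The image $\overline{\C_H(P)}$ lies in $\C_{\bar H}(\bar P)$ because it centralizes $P$ outright, and $\bar P$ lies in $\C_{\bar H}(\bar P)$ because it is abelian (this is where $P' \leq Q$ is essential). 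Hence $\N_{\bar H}(\bar P) = \C_{\bar H}(\bar P)$, and Burnside's normal $p$-complement theorem for abelian Sylow subgroups delivers that $\bar H$ is $p$-nilpotent.

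For Stage 2 (lifting), I would write the normal $p$-complement of $\bar H$ as $K/Q$, so $K \normaleq H$ and $Q$ is the unique Sylow $p$-subgroup of $K$. Schur--Zassenhaus yields $K = Q \rtimes L$ for a Hall $p'$-subgroup $L$, and a Frattini argument then gives $H = K\N_H(L) = Q\N_H(L)$. Dedekind's modular law (using $Q \leq P$) produces $P = Q \cdot (P \cap \N_H(L))$, and at this point the hypothesis $Q \leq \Phi(P)$ delivers the payoff: non-generation of the Frattini subgroup forces $P \cap \N_H(L) = P$, so $P \leq \N_H(L)$. Since $H = PK = PL$ with $P \cap L \leq Q \cap L = 1$, the subgroup $L$ is a normal $p$-complement of $H$. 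The only subtle point I anticipate is recognizing that the two hypotheses on $Q$ serve distinct purposes; once the two-stage decomposition is in place, each stage reduces to a classical argument (Burnside's transfer for the abelian Sylow quotient; Frattini plus non-generation for the lift).
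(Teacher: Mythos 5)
Your proposal is correct and follows essentially the same route as the paper: reduce to working inside $\N_G(Q)$ (the paper assumes $Q \normaleq G$ without loss of generality, which amounts to your setting $H = \N_G(Q)$), apply Burnside's normal $p$-complement theorem in the quotient by $Q$ using $P' \leq Q$, then lift via Schur--Zassenhaus, the Frattini argument, Dedekind's law, and the non-generation property of $Q \leq \Phi(P)$. The only cosmetic difference is that the paper phrases Stage 1 as $P/Q \leq \Z(\N_{G/Q}(P/Q))$ directly from $[P,\N_G(P)] = P' \leq Q$, whereas you derive the equivalent condition $\N_{\bar H}(\bar P) = \C_{\bar H}(\bar P)$.
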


The original statement of Navarro's theorem is slightly weaker, namely, it makes the stronger assumption that $\N_G(P) = P$ and only considers the case where $Q = P'$. The following proof is based on one given by I.M. Isaacs for Navarro's original statement. The inspiration for considering this stronger version is in Remark \ref{r:Navarro strong enough}.

\begin{proof}
As $P' \leq Q$, $P$ is a Sylow $p$-subgroup of $\N_G(Q)$. Also, $\N_{\N_G(Q)}(Q) = \N_G(Q)$ and so, without loss of generality, we assume that $Q \normaleq G$. As \[[P, \N_G(P)] = [P, P\C_G(P)] = P' \leq Q,\] we have $P/Q \leq \Z(\N_G(P)/Q) = \Z(\N_{G/Q}(P/Q))$ and so by Burnside's normal $p$-complement theorem (\cite[Theorem 7.4.3]{Gorenstein1980}), $P/Q$ has a normal complement in $G/Q$. Let $Q \normaleq K \normaleq G$ such that $G = PK$ and $P \cap K = Q$. The Schur--Zassenhaus theorem (\cite[Theorem 6.2.1]{Gorenstein1980}) implies that there exists $L \leq K$ such that $K = QL$ and $Q \cap L = 1$.
\[
 \xymatrix{& G \ar@{-}[dl] \ar@{=}[dr] & & \\ P \ar@{-}[dr] & & K \ar@{=}[dl] \ar@{-}[dr] & \\ & Q \ar@{-}[dr] & & L \ar@{-}[dl] \\ & & 1 &}
\]

 As $Q$ is solvable, any two such complements are $K$-conjugate. By the Frattini argument, $G = K \N_G(L) = QL\N_G(L) = Q\N_G(L)$. Dedekind's Lemma now implies that $P = Q\N_P(L)$ and since $Q \leq \Phi(P)$, this gives $P = \N_P(L)$. As $G = PK = PQL = PL$, we have $L \normaleq G$ and so $L$ is a normal $p$-complement for $G$, i.e., $G$ is $p$-nilpotent.

\end{proof}

\begin{prop}\label{F=PC_F(Q)}
 Let $\F$ be a fusion system on a finite $p$-group $P$ such that $\N_\F(P)$ is trivial. If $P' \leq Q \leq P$ such that $\F = P\C_\F(Q)$, then $\F$ is trivial.

\end{prop}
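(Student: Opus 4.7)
The plan is to pass to the quotient fusion system $\F/Q$ on the abelian group $P/Q$, show that $\F/Q$ is trivial, and then invoke Proposition~\ref{F=PC_F(Q)2}(1) to conclude $\F = \F_P(P)$. The hypothesis $P' \leq Q$ is crucial in two places: it forces $Q \normaleq P$, and it makes $P/Q$ abelian.

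First I would check that $Q$ is strongly $\F$-closed, so that $\F/Q$ makes sense as a saturated fusion system. Given $A \leq Q$ and an $\F$-isomorphism $\phi : A \to A'$, the hypothesis $\F = P\C_\F(Q)$ extends $\phi$ to $\varphi : QA \to QA'$ with $\varphi|_Q = c_u|_Q$ for some $u \in P$; since $QA = Q$, the restriction $\varphi|_A$ agrees with both $\phi$ and $c_u|_A$, giving $A' = c_u(A) \leq c_u(Q) = Q$.

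The remaining work is to show $\F/Q$ is trivial. From $\N_\F(P) = \F_P(P)$ one has $\Aut_\F(P) = \Aut_P(P)$; and for any $v, x \in P$ the commutator $[v, x]$ lies in $P' \leq Q$, so every $c_v$ descends to the identity on $P/Q$. Since $Q$ is strongly $\F$-closed, $\Aut_{\F/Q}(P/Q)$ equals the image of $\Aut_\F(P)$ in $\Aut(P/Q)$, hence is trivial. Now on the abelian $p$-group $P/Q$ we have $\Aut_{P/Q}(R/Q) = 1$ for every $R/Q \leq P/Q$, so the subgroup $\N_\phi$ in the extension axiom equals all of $P/Q$ for any $\F/Q$-isomorphism $\phi$; thus each such $\phi$ extends to an element of $\Aut_{\F/Q}(P/Q) = 1$ and is the identity on its domain. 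Every $\F/Q$-morphism is therefore an inclusion, $\F/Q = \F_{P/Q}(P/Q)$, and Proposition~\ref{F=PC_F(Q)2}(1) applied with $R = P$ completes the proof. The main obstacle is this last sub-step — recognizing that a saturated fusion system on an abelian $p$-group is controlled by $\Aut_\F(P)$ — but it is essentially a one-line consequence of the extension axiom.
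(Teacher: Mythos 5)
Your proof is correct and follows essentially the same route as the paper: pass to $\F/Q$, use that $P/Q$ is abelian together with $\Aut_\F(P)=\Aut_P(P)$ to see that $\F/Q$ is trivial, and then lift back via Proposition~\ref{F=PC_F(Q)2}. The only cosmetic difference is that you re-derive Burnside's fusion theorem for the abelian quotient from the extension axiom and compute $\Aut_{\F/Q}(P/Q)$ directly, where the paper simply cites Burnside's theorem and transfers the triviality of $\N_\F(P)$ through Proposition~\ref{KL3.4}.
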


\begin{proof}
 By Burnside's fusion theorem (see ~\cite[Theorem 3.8]{Linckelmann2007}), $\F/Q = \N_{\F/Q}(P/Q)$ and by Proposition \ref{KL3.4}, $\N_{\F/Q}(P/Q)$ is trivial. Therefore, $\F/Q$ is trivial and, applying Proposition \ref{KL3.4} again, $\F$ is trivial. 
\end{proof}

	\begin{thm}\label{Navarrofusion}
 Let $\F$ be a fusion system on a finite $p$-group $P$. If $\N_\F(P)$ is trivial, then $\N_\F(Q)$ is trivial for every subgroup $P' \leq Q \leq \Phi(P)$.
\end{thm}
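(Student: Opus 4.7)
The plan is to argue by induction on $|\F|$ (the number of morphisms). Let $\F$ be a minimal counterexample, so $\N_\F(P) = \F_P(P)$, and fix $Q$ with $P' \leq Q \leq \Phi(P)$ and $\N_\F(Q) \neq \F_P(P)$.

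The first reduction is to $Q \normal \F$. If instead $\N_\F(Q) \subsetneq \F$, then $\N_\F(Q)$ is a proper fusion subsystem on $P$ (since $Q \normaleq P$ gives $\N_P(Q) = P$) satisfying the theorem's hypothesis, because $\N_{\N_\F(Q)}(P) \subseteq \N_\F(P) = \F_P(P)$. Induction applied to $\N_\F(Q)$ with the subgroup $Q$ itself then yields $\N_{\N_\F(Q)}(Q) = \F_P(P)$; since $Q$ is normal in $\N_\F(Q)$ by construction, this equals $\N_\F(Q)$, contradicting our choice of $Q$.

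Now set $R = Q\C_P(Q) \normaleq P$ and split on whether $R \normal \F$. In the first subcase ($R \not\normal \F$), $\N_\F(R)$ is a proper subsystem of $\F$ still satisfying the hypothesis; since $Q \normal \F$ gives $Q \normal \N_\F(R)$, induction applied to $\N_\F(R)$ shows $\N_\F(R) = \N_{\N_\F(R)}(Q) = \F_P(P)$. Then Lemma \ref{Stancu} collapses $\F$ to $P\C_\F(Q)$, and Proposition \ref{F=PC_F(Q)} yields $\F = \F_P(P)$, contradicting the choice of $Q$. In the second subcase ($R \normal \F$), the inclusion $\C_P(R) \leq \C_P(Q) \leq R$ makes $R$ into a $P$-centric subgroup, and $R^\F = \{R\}$ upgrades this to $\F$-centricity, so $\F$ is constrained. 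By \cite[Proposition 4.3]{BrotoCastellanaGrodalLeviOliver2005}, $\F = \F_P(G)$ for a finite group $G$ with Sylow $p$-subgroup $P$; the hypothesis $\N_\F(P) = \F_P(P)$ translates to $\N_G(P)$ being $p$-nilpotent, so Navarro's classical Theorem \ref{t:Navarro} gives $\N_G(Q)$ $p$-nilpotent, whence $\N_\F(Q) = \F_P(\N_G(Q)) = \F_P(P)$, again contradicting the choice of $Q$.

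I expect the first subcase---where $Q\C_P(Q)$ fails to be normal in $\F$---to be the main obstacle: the Onofrei--Stancu decomposition in Lemma \ref{Stancu} is precisely the fusion-theoretic analogue of the Frattini argument used in Navarro's classical proof, and it feeds perfectly into Proposition \ref{F=PC_F(Q)}. The second subcase, by contrast, is disposed of by the constrained-system correspondence together with the classical group-theoretic Navarro theorem.
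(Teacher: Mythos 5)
Your proof is correct and follows essentially the same route as the paper's: reduce to $Q \normal \F$, split on whether $Q\C_P(Q)$ is normal in $\F$, and finish via Lemma \ref{Stancu} together with Proposition \ref{F=PC_F(Q)} in one branch, and via constrainedness plus the group-theoretic Theorem \ref{t:Navarro} in the other. The only differences are cosmetic: you derive the triviality of $\N_\F(Q\C_P(Q))$ directly from the inductive hypothesis rather than through the sparseness packaging of Theorem \ref{t:sparse}, and you explicitly treat the subcase $Q < Q\C_P(Q) \normal \F$, which the paper leaves implicit but which is likewise absorbed by the constrained case.
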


\begin{proof}
 Without loss of generality, we may assume that $Q \normal \F$.
  Let $\F$ be a minimal counterexample with respect to $|\F|$ (so that $Q > 1$).
   If $Q$ is $\F$-centric, then $\F$ is constrained. By ~\cite[Proposition 4.3]{BrotoCastellanaGrodalLeviOliver2005},
    $\F$ is the fusion system of a finite group $G$ with Sylow
     $p$-subgroup $P$ and satisfying $\N_G(P)=P\C_G(P)$. The result now follows from
       Theorem \ref{t:Navarro}. So we assume that $Q < Q\C_P(Q)$ and
        that $Q\C_P(Q) \nnormal \F$. As $\F$ is sparse and
        $\N_\F(P)$ is trivial, Theorem \ref{t:sparse} and Proposition \ref{F=PC_F(Q)} imply that $\F = P\C_\F(Q) = \F_P(P)$, a contradiction.
\end{proof}

\begin{rem}\label{r:Navarro strong enough}
  For any choice of $Q$, the proof of Theorem \ref{Navarrofusion} only requires Navarro's original theorem (where $\N_G(P) = P$) and not the full strength of Theorem \ref{t:Navarro}. In fact, if $Q$ is a normal $\F$-centric subgroup of $P$, then $\F = \F_P(G)$ for some finite group $G$ with $Q \normaleq G$, $\C_G(Q) \leq Q$ and, since $\Aut_\F(P)$ is a $p$-group, $\N_G(P) = P\C_G(P)$. Consequently, $\C_G(P) \leq \C_G(Q) = \Z(Q) \leq P$ and so $\N_G(P) = P$. This gives an excellent example of a statement about groups being used to prove a result in the context of fusion systems and thereby obtaining a stronger result about groups.
\end{rem}

%       So we assume that $Q < Q\C_P(Q)$ and
%        that $Q\C_P(Q) \nnormal \F$. By the minimality of $\F$,
%         $\N_{\N_\F(Q\C_P(Q))}(Q) = \F_P(P)$. If also $P\C_\F(Q) < \F$,
%          then by Stancu's lemma (Lemma \ref{Stancu}), $\F = \N_\F(Q) =
%           \langle \N_{P\C_\F(Q)}(Q), \N_{\N_\F(Q\C_P(Q))}(Q) \rangle =
%            \F_P(P)$, a contradiction. Thus, we have $\F = P\C_\F(Q)$.
%             But then, by Proposition \ref{F=PC_F(Q)}, we again get $\F =
%              \F_P(P)$, a contradiction.

\subsection*{Navarro's Theorem for Blocks}
Recall that if $b$ is a $p$-block of a finite group $G$ over an algebraically closed field and if $(P,e_P)$ is a maximal $b$-Brauer pair, then for every subgroup $Q$ of $P$, there exists a unique block $e_Q$ of $\C_G(Q)$ such that $(Q, e_Q) \leq (P,e_P)$ (for details about this inclusion, see \cite{AlperinBroue1979} or \cite{BrouePuig1980}). The group $G$ acts on the set of $b$-Brauer pairs by conjugation and this gives rise to a saturated fusion system on $P$ where for $Q,R \leq P$, conjugation by an element $g \in G$ is in the fusion system if it respects the $b$-Brauer pair structure, i.e., if $(Q,e_Q)^g \leq (R,e_R)$. The $\F$-automorphism groups of subgroups of $P$ are easily seen to be $\Aut_\F(Q) \cong \N_G(Q,e_Q)/\C_G(Q)$ and $|\Out_\F(P)|$ is called the \textit{inertial index} of $b$. The block $b$ is called \textit{nilpotent} if the corresponding saturated fusion system is trivial. For more details and a presentation of the structure of nilpotent blocks, we refer the reader to \cite{Thevenaz1995}. For an explicit proof that a block gives rise to a saturated fusion system, see \cite{Kessar2007}. The following corollary generalizes the classical result (see \cite{Kulshammer1980} or \cite{BrouePuig1980}) that a block with inertial index 1 and abelian defect group is nilpotent.

\begin{cor}\label{cor to Navarro}
 Let $b$ be a $p$-block of a finite group $G$ with inertial index 1 and maximal $b$-Brauer pair $(P, e_P)$. Let $P' \leq Q \leq \Phi(P)$ and let $e_Q$ be the unique block of $\C_G(Q)$ such that $(Q, e_Q) \leq (P, e_P)$. If $f_Q$ is a block of $\N_G(Q)$ covering $e_Q$ and which is in Brauer correspondence with $b$, then $f_Q$ is nilpotent. In particular, if $Q \normaleq G$, then $b$ is nilpotent.
 \[
 \xymatrix{G \ar@{-}[dd] & b \\
 & \\
 \N_G(Q) \ar@{-}[d] & f_Q \\
 \N_G(Q,e_Q) \ar@{-}[d] & e_Q \\
 \C_G(Q) & e_Q
 }
 \]
\end{cor}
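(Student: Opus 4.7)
The plan is to translate the block-theoretic hypothesis into the fusion-system language, apply Theorem \ref{Navarrofusion}, and then translate back. Let $\F$ be the saturated fusion system on $P$ associated to $b$ and the maximal $b$-Brauer pair $(P,e_P)$.

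First I would show that the hypothesis of inertial index $1$ is precisely the statement that $\N_\F(P)$ is trivial. By definition, the inertial index of $b$ is $|\Out_\F(P)|$, so this hypothesis gives $\Aut_\F(P) = \Aut_P(P)$. Since $P \normal \N_\F(P)$, Lemma \ref{l:no essentials} applied to $\N_\F(P)$ (as a fusion system on $P$) yields $\N_\F(P) = \F_P(P)$. I would then apply Theorem \ref{Navarrofusion} directly to conclude that for every $P' \leq Q \leq \Phi(P)$, the normalizer subsystem $\N_\F(Q)$, viewed as a fusion system on $\N_P(Q)$, is trivial.

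Finally, to translate back to block theory, I would invoke the standard identification (see \cite{Kessar2007}) of the fusion system of $f_Q$ on its maximal $f_Q$-Brauer pair $(\N_P(Q), e_{\N_P(Q)})$ with $\N_\F(Q)$; since the latter is trivial, $f_Q$ is nilpotent by the definition given just before the statement of the corollary. For the ``in particular'' claim, if $Q \normaleq G$ then $\N_G(Q) = G$, so the block $f_Q$ of $\N_G(Q)$ in Brauer correspondence with $b$ must be $b$ itself, and hence $b$ is nilpotent.

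The main obstacle is the identification in the last step of the fusion system of $f_Q$ with $\N_\F(Q)$. This is well-known block-fusion machinery, but it needs the Brauer pairs to be chosen compatibly so that the inclusion $(Q,e_Q) \leq (\N_P(Q), e_{\N_P(Q)}) \leq (P,e_P)$ realizes the $\F$-normalizer as the fusion data of the Brauer correspondent $f_Q$. Since the paper is already implicitly using this machinery when it attaches $\F$ to $b$, the identification can be invoked by citation rather than reproved, and the rest of the argument is a direct application of Theorem \ref{Navarrofusion}.
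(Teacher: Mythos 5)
Your proposal follows essentially the same route as the paper: translate inertial index $1$ into the triviality of $\N_\F(P)$, apply Theorem \ref{Navarrofusion} to get $\N_\F(Q) = \F_P(P)$ (noting $\N_P(Q) = P$ since $P' \leq Q$), and then identify the fusion system of the Brauer correspondent $f_Q$ with $\N_\F(Q)$ --- the paper does this last step via $\F_{(P,e_P)}(\N_G(Q,e_Q), e_Q) = \F_{(P,e_P)}(\N_G(Q), f_Q)$, citing a proposition of Kessar, which is exactly the ``compatible Brauer pair'' identification you flag as the main obstacle. The argument is correct and matches the paper's proof.
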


\begin{proof}
Let $\F = \F_{(P,e_P)}(G,b)$ be the saturated fusion system on $P$ corresponding to the block $b$ and the maximal $b$-Brauer pair $(P,e_P)$. The condition that $b$ has inertial index 1 is equivalent to the condition that $\N_\F(P)$ is trivial. Therefore, by Theorem \ref{Navarrofusion}, $\F_P(P) = \N_\F(Q) = \F_{(P, e_P)}(\N_G(Q,e_Q), e_Q)$ and hence $e_Q$ is nilpotent as a block of $\N_G(Q,e_Q)$. As $Q$ is a normal $p$-subgroup of $\N_G(Q)$, $f_Q = \tr_{\N_G(Q, e_Q)}^{\N_G(Q)}(e_Q)$ is a block of $\N_G(Q)$ covering $e_Q$ and is in Brauer correspondence with $b$. Now, \cite[Proposition 2.13]{Kessar2006} implies that $\F_{(P, e_P)}(\N_G(Q,e_Q), e_Q) = \F_{(P, e_P)}(\N_G(Q), f_Q)$ completing the proof.
\end{proof}

%A simpler way to see this however, is to recall the generalization for fusion systems of a theorem of Burnside that says a saturated fusion system on an abelian $p$-group has essential rank 0. The result now follows from Lemma \ref{l:essential lemma}(\ref{lp:F=P.Out_F(P)}).

\subsection*{Slim $p$-Groups}\label{ss:slim}

In \cite{Weigel2009pre}, Weigel makes the following definition. Set $Y_1 = C_p \wr C_p$ and for $m > 1$, define $Y_m$ to be the pull-back in the diagram:
\[
\xymatrix{C_p \wr C_p \ar[r] & C_p \\ Y_m \ar@{.>}[r] \ar@{.>}[u] & C_{p^m} \ar[u]}
\]

\begin{defn} With the notation as above, a finite $p$-group $P$ is \textit{slim} if $Y_m$ is not a subgroup of $P$ for all $m \geq 1$.
\end{defn}

\begin{thm}[\cite{Weigel2009pre}]\label{t:Weigel}
Let $G$ be a finite group and let $P$ a Sylow $p$-subgroup of $G$. If
\begin{enumerate}
\item $p$ is odd and $P$ is slim, or
\item $p = 2$ and $P$ is $D_8$-free,
\end{enumerate}
then $G$ is $p$-nilpotent if and only if $\N_G(P)$ is $p$-nilpotent.
\end{thm}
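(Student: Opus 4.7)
The forward direction is immediate: any subgroup of a $p$-nilpotent group is again $p$-nilpotent, so $\N_G(P)$ is $p$-nilpotent whenever $G$ is. For the converse, my plan is to pass to the saturated fusion system $\F = \F_P(G)$ and imitate the sparse-implies-constrained argument used in the proof of the fusion system $\Z\J$-theorem earlier in this section. Under this translation, the hypothesis that $\N_G(P)$ is $p$-nilpotent becomes $\N_\F(P) = \F_P(P)$, and the goal becomes $\F = \F_P(P)$.

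I would argue by minimal counterexample, taking $\F$ with $|\F|$ minimal among saturated fusion systems on a slim (resp.\ $D_8$-free) finite $p$-group satisfying $\N_\F(P) = \F_P(P)$ and $\F \neq \F_P(P)$. Any proper saturated subsystem $\CG \subseteq \F$ on $P$ satisfies $\N_\CG(P) \subseteq \N_\F(P) = \F_P(P)$ while $P$ retains the slim (resp.\ $D_8$-free) hypothesis; by minimality $\CG$ must equal $\F_P(P)$, so $\F$ is sparse. For $p = 2$, I would next observe that $P$ being $D_8$-free forces $\F$ to be $S_4$-free: the Sylow $2$-subgroup of any constrained group $G_Q$ produced by \cite[Proposition 4.3]{BrotoCastellanaGrodalLeviOliver2005} embeds in $P$ and is therefore $D_8$-free, which rules out $S_4$ as a section of $G_Q$. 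Hence Theorem \ref{t:sparse}(2) applies in both cases and gives that $\F$ is constrained, so $\F = \F_P(H)$ for some finite group $H$ with $\O_{p'}(H) = 1$, $\C_H(\O_p(H)) \leq \O_p(H)$, and $P \in \Syl_p(H)$.

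Translating the hypothesis back to $H$, the condition $\Out_\F(P) = 1$ reads as $\N_H(P) = P\C_H(P)$; the chain $\C_H(P) \leq \C_H(\O_p(H)) \leq \O_p(H) \leq P$ then forces $\C_H(P) = \Z(P)$, so $\N_H(P) = P$. Applying Weigel's statement for groups directly to $H$ gives that $H$ is $p$-nilpotent, and together with $\O_{p'}(H) = 1$ this yields $H = P$, whence $\F = \F_P(P)$, the desired contradiction.

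The main obstacle, and the reason the plan above is not genuinely self-contained, is precisely this closing step. The sparse-fusion-systems reduction produces a constrained group $H$ with $\N_H(P) = P$ and slim (resp.\ $D_8$-free) Sylow $p$-subgroup, but extracting $H = P$ in this situation is essentially Weigel's theorem itself in the constrained case; the argument above therefore parallels how the $\Z\J$-theorem proof earlier invokes the classical Glauberman-Thompson theorem as a black box. A truly self-contained treatment would require analyzing the faithful action of the $p'$-part of $H/\O_p(H)$ on $\O_p(H)$ and showing that the absence of the sections $Y_m$ (resp.\ $D_8$) in $P$ prevents any nontrivial such action---this is the technical heart of Weigel's original paper, and I would not attempt to reproduce it inside the present framework.
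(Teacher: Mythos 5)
Your proposal does not prove the stated theorem, and you have in fact identified the gap yourself: the closing step invokes ``Weigel's statement for groups directly to $H$,'' which is the very theorem being proved, merely specialized to the constrained case. The reduction you carry out (minimal counterexample is sparse, hence constrained by Theorem \ref{t:sparse}, hence realized by a group $H$ with $\N_H(P)=P$) is correct in every detail --- including the observation via Lemma \ref{l:free} that $D_8$-freeness of $P$ forces $S_4$-freeness of $\F$ --- but it only reduces the general statement to the constrained statement; it does not discharge it. The actual content of Weigel's theorem is precisely the analysis you decline to reproduce: showing that a nontrivial $p'$-group of automorphisms acting faithfully on $\O_p(H)$ and normalized appropriately would force a section $Y_m$ (resp.\ $D_8$) inside $P$. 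Without that, there is no proof.

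It is worth being clear about how this relates to the paper. Theorem \ref{t:Weigel} is \emph{quoted} from \cite{Weigel2009pre}; the paper offers no proof of it and uses it as a black box. What the paper does prove in this subsection is the fusion-system analogue (the unnumbered theorem following Lemma \ref{l:free}), and its argument is exactly your reduction: a minimal counterexample is sparse, hence constrained, hence of the form $\F_P(H)$, at which point the group-theoretic Theorem \ref{t:Weigel} is applied to $H$. So your write-up is a correct proof of the fusion-system version assuming the group version, in the same spirit as the paper's treatment of the $\Z\J$-theorem --- but as a proof of the group-theoretic statement itself it is circular, and the missing piece is not a formality but the technical heart of Weigel's paper.
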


The following elementary lemma connects Theorem \ref{t:sparse} with the present context.

\begin{lem}\label{l:free}
Let $\F$ be a saturated fusion system on a finite $p$-group $P$ and let $H$ be a finite group with $p$-subgroup $Q$. If $P$ is $Q$-free, then $\F$ is $H$-free.
\end{lem}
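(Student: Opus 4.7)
The plan is to prove the contrapositive: if $\F$ is not $H$-free, then $P$ is not $Q$-free. Unwinding the definition (the paper spells it out only for $S_4$-freeness, but the general case is analogous), $\F$ is $H$-free when, for every $\F$-centric, fully $\F$-normalized subgroup $R \leq P$, the associated finite group $G_R$ given by \cite[Proposition 4.3]{BrotoCastellanaGrodalLeviOliver2005} is itself $H$-free in the usual group-theoretic sense, i.e., admits no section isomorphic to $H$. So I would begin by picking such an $R$ for which $G_R$ has $H$ as a section, and write $H \cong K/L$ with $L \normaleq K \leq G_R$.

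The next step is to transfer the given $p$-subgroup $Q \leq H$ into a Sylow $p$-subgroup of $G_R$. Let $K_0 \leq K$ be the preimage of $Q$ under the quotient $K \surj K/L$, so that $Q \cong K_0/L$, and let $P_0$ be a Sylow $p$-subgroup of $K_0$. Since $K_0/L$ is itself a $p$-group, a standard Sylow/index argument (the index $[K_0 : P_0 L]$ divides the $p'$-number $[K_0:P_0]$, while $K_0/P_0 L$ is a quotient of the $p$-group $K_0/L$) forces $P_0 L = K_0$, so
\[
Q \cong K_0/L = P_0 L / L \cong P_0 / (P_0 \cap L),
\]
exhibiting $Q$ as a section of the $p$-subgroup $P_0 \leq G_R$.

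To finish, I would invoke \cite[Proposition 4.3]{BrotoCastellanaGrodalLeviOliver2005} once more, for the fact that $\N_P(R)$ is a Sylow $p$-subgroup of $G_R$. Every $p$-subgroup of $G_R$, and in particular $P_0$, is therefore conjugate into $\N_P(R) \leq P$, so $Q$ is a section of $P$, contradicting the hypothesis that $P$ is $Q$-free. The whole argument is essentially bookkeeping; the only mild subtlety is the lift-to-a-Sylow step of the second paragraph, and that is a routine application of Sylow's theorem, so I do not anticipate a genuine obstacle.
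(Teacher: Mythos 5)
Your proposal is correct and follows essentially the same route as the paper: both pass to the group $G_R$ attached to a fully $\F$-normalized, $\F$-centric subgroup, realize the $p$-group section via a Sylow $p$-subgroup of the relevant subgroup (using $P_0L = K_0$), and conjugate that Sylow into $\N_P(R) \leq P$ to contradict $Q$-freeness of $P$. The only cosmetic difference is that the paper first reduces to showing $G_R$ is $Q$-free (since $Q$-free implies $H$-free) rather than taking the preimage of $Q$ inside a section isomorphic to $H$.
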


\begin{proof}
Let $S$ be a fully $\F$-normalized, $\F$-centric subgroup of $P$ and let $G$ be the unique finite group with Sylow $p$-subgroup $\N_P(S)$ guaranteed by \cite[Proposition 4.3]{BrotoCastellanaGrodalLeviOliver2005}. If $G$ is $Q$-free, then it is $H$-free, so assume that $K/L \cong Q$ is a section of $G$. A Sylow $p$-subgroup $R$ of $K$ is conjugate to a subgroup of $\N_P(S) \leq P$ and hence is $Q$-free. On the other hand, since $K/L$ is a $p$-group, $Q \cong K/L \cong RL/L \cong R/(R \cap L)$, a contradiction.
\end{proof}

The restatement of Theorem \ref{t:Weigel} to fusion systems is clear and a minimal counterexample to such a result is easily seen to be sparse. Moreover, if $P$ is $D_8$-free, then the previous lemma shows that any saturated fusion system on $P$ is $S_4$-free. So, regardless of $p$, Theorem \ref{t:sparse} implies that the minimal counterexample is constrained.

\begin{thm}
Let $\F$ be a saturated fusion system on a finite $p$-group $P$. If \begin{enumerate}
\item $p$ is odd and $P$ is slim, or
\item $p = 2$ and $P$ is $D_8$-free,
\end{enumerate}
then $\F = \F_P(P)$ if and only if $\N_\F(P) = \F_P(P)$. 
\end{thm}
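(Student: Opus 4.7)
The forward implication is immediate, so I would focus on showing that $\N_\F(P) = \F_P(P)$ forces $\F = \F_P(P)$. The natural approach is to pass to a minimal counterexample $\F$ with respect to $|\F|$, exactly as in the proofs of Theorems 3.3 and 4.5. Any proper subfusion system $\CG \subsetneq \F$ on $P$ satisfies $\N_\CG(P) \subseteq \N_\F(P) = \F_P(P)$, so by minimality $\CG = \F_P(P)$. Hence $\F$ is sparse.

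Next I would use Theorem \ref{t:sparse}(2) to deduce that $\F$ is constrained. In case (1) this is immediate since $p$ is odd. In case (2), I need $\F$ to be $S_4$-free, which is exactly what Lemma \ref{l:free} provides: $D_8$ is a Sylow $2$-subgroup of $S_4$, so if $P$ is $D_8$-free then $\F$ is $S_4$-free. Either way, Theorem \ref{t:sparse}(2) applies, and $\F$ is constrained.

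Being constrained, $\F = \F_P(G)$ for some finite group $G$ with Sylow $p$-subgroup $P$, by \cite[Proposition 4.3]{BrotoCastellanaGrodalLeviOliver2005}. The hypothesis $\N_\F(P) = \F_P(P)$ translates into $\F_P(\N_G(P)) = \F_P(P)$, which is to say $\N_G(P)$ is $p$-nilpotent. Since the hypotheses of the theorem are phrased directly on the $p$-group $P$, the same $P$ appears as the Sylow $p$-subgroup of $G$, and Weigel's Theorem \ref{t:Weigel} applies: $G$ is $p$-nilpotent, so $\F = \F_P(G) = \F_P(P)$, contradicting the assumption that $\F$ was a counterexample.

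The only nonroutine step is the reduction from the fusion-system hypothesis ($D_8$-free $P$) to the group-theoretic hypothesis required by Theorem \ref{t:Weigel}, and this is handled cleanly by Lemma \ref{l:free}; the rest of the argument is the now-familiar template of reducing a $p$-nilpotency criterion to its group-theoretic counterpart via Theorem \ref{t:sparse}.
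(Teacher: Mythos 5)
Your proof is correct and follows essentially the same route as the paper, which only sketches the argument in the paragraph preceding the theorem: minimal counterexample is sparse, Lemma \ref{l:free} supplies $S_4$-freeness in the $D_8$-free case, Theorem \ref{t:sparse} gives constraint, and Weigel's group-theoretic result finishes. Your write-up simply makes explicit the steps the paper leaves to the reader.
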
 

Restricting to blocks we get the following corollary, again generalizing the result that a block with inertial index 1 and abelian defect group is nilpotent.

\begin{cor}
Let $b$ be a $p$-block of a finite group $G$ with defect group $P$ and inertial index 1. If $p$ is odd and $P$ is slim or if $p = 2$ and $P$ is $D_8$-free, then $b$ is nilpotent. 
\end{cor}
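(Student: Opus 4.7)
The plan is to translate the hypotheses directly into statements about the saturated fusion system $\F = \F_{(P,e_P)}(G,b)$ associated to the block $b$ and a choice of maximal $b$-Brauer pair $(P,e_P)$, and then invoke the fusion-theoretic version of Weigel's theorem proved immediately above.

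First, I would recall from the block-theoretic setup preceding Corollary \ref{cor to Navarro} that the inertial index of $b$ equals $|\Out_\F(P)|$, so the hypothesis that $b$ has inertial index $1$ is equivalent to $\Out_\F(P) = 1$. Since $\Aut_P(P) \in \Syl_p(\Aut_\F(P))$ by the Sylow axiom, this gives $\Aut_\F(P) = \Aut_P(P)$, and by Lemma \ref{l:no essentials} (or directly by Alperin's fusion theorem applied to $\N_\F(P)$, whose only essential-type contribution would come from $P$ itself, which is excluded by Lemma \ref{l:essential lemma}(\ref{lp:P not essential})), this yields $\N_\F(P) = \F_P(P)$. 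This is exactly the hypothesis of the preceding theorem for $\F$.

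Next, I would verify that the structural hypothesis on $P$ passes unchanged from the group-theoretic setting to the fusion-theoretic one: the defect group $P$ is by assumption slim (if $p$ is odd) or $D_8$-free (if $p = 2$), and these are intrinsic properties of $P$ as an abstract $p$-group, which is the $p$-group on which $\F$ is defined. Therefore the preceding theorem applies verbatim to $\F$, giving $\F = \F_P(P)$.

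Finally, by the definition of a nilpotent block recalled just before Corollary \ref{cor to Navarro}, $b$ is nilpotent exactly when its associated saturated fusion system is trivial, i.e., $\F = \F_P(P)$. This is what we have just obtained, completing the proof. There is no real obstacle here; the corollary is a dictionary translation, and the only points requiring a line of justification are the identification of inertial index $1$ with $\N_\F(P) = \F_P(P)$ and the observation that slimness and $D_8$-freeness are properties of $P$ alone, both of which are already essentially spelled out in the paper.
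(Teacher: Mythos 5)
Your proposal is correct and follows exactly the route the paper intends: the paper states this corollary without proof as an immediate restriction of the preceding fusion-system version of Weigel's theorem, using the same dictionary (inertial index $1$ $\Leftrightarrow$ $\N_\F(P)=\F_P(P)$, nilpotent block $\Leftrightarrow$ $\F=\F_P(P)$) that it already spelled out in the proof of Corollary \ref{cor to Navarro}. Your extra justification that $\Out_\F(P)=1$ together with the Sylow axiom forces $\N_\F(P)=\F_P(P)$ is sound and merely makes explicit what the paper leaves implicit.
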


\section{Extremely sparse fusion sytems}\label{S:extremely sparse fusion systems}
In the spirit of \S \ref{S:sparse fusion systems}, we define an even more restrictive situation, namely that of an \emph{extremely sparse fusion system}.

\begin{defn} A nontrivial saturated fusion system $\F$ on a finite $p$-group $P$ is called \emph{extremely sparse} if the only proper subsystem of $\F$ on any subgroup $Q$ of $P$ is $\F_Q(Q)$.
\end{defn}
Clearly, if a fusion system is extremely sparse, then it is sparse. We will see that every extremely sparse fusion system is constrained, regardless of $p$. In fact, we offer two proofs of this result, the first because it seems Lemma \ref{l:essential lemma}(\ref{lp:essential no extend}) may have independent interest and the second because it gives a clearer picture of this situation. The author thanks David Craven and Radu Stancu for their helpful input on this section.

%\begin{prop}\label{extremely sparse}
%If $\F$ is an extremely sparse fusion system on a finite $p$-group $P$, then $\F$ is constrained or $\F = P\C_\F(\O_p(\F))$.
%\end{prop}
%
%\begin{proof}
%If $\N_\F(Q) = \F_Q(Q)$ for every nontrivial fully $\F$-normalized subgroup $Q$ of $P$, then $\F = \F_P(P)$ by Frobenius' theorem. Since $\F$ is extremely sparse, this implies that $O_p(\F) \neq 1$. If $\O_p(\F)$ is $\F$-centric, then $\F$ is constrained. Otherwise, the result follows from Proposition \ref{t:sparse}.
%\end{proof}

\begin{thm}\label{t:extremely sparse}
Every extremely sparse fusion system is constrained.
\end{thm}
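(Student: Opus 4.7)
The plan is to show directly that under the extreme sparseness hypothesis, either $P$ itself is normal in $\F$ or there exists a fully $\F$-normalized essential subgroup which is already normal in $\F$; in either case a normal $\F$-centric subgroup is exhibited, so $\F$ is constrained.

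First I would dispose of the essential-rank-zero case: if $\rank_e(\F) = 0$, then Lemma \ref{l:no essentials} gives $P \normaleq \F$, and $P$ is trivially $\F$-centric. So we may assume $\F$ has at least one essential subgroup. Choose an essential $Q$ and a fully $\F$-normalized representative $R \in Q^\F$; since essentiality is preserved by $\F$-isomorphism, $R$ is essential as well. The whole task then reduces to proving $R \normaleq \F$.

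The key step is to feed $\N_\F(R)$ into the extreme sparseness hypothesis twice. Since $R$ is fully $\F$-normalized, $\N_\F(R)$ is a saturated subsystem of $\F$ with underlying $p$-group $\N_P(R) \leq P$. Suppose first, for contradiction, that $\N_P(R) < P$; then $\N_\F(R)$ is a saturated subsystem of $\F$ on a proper subgroup of $P$, so by extreme sparseness $\N_\F(R) = \F_{\N_P(R)}(\N_P(R))$. But every $\phi \in \Aut_\F(R)$ trivially lies in $\Aut_{\N_\F(R)}(R)$ (take $\varphi = \phi$ in the defining condition), so this equality would force $\Aut_\F(R) = \Aut_P(R)$, a $p$-group, contradicting Lemma \ref{l:essential lemma}(\ref{lp:p'-autos for essentials}). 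Hence $\N_P(R) = P$, i.e., $R \normaleq P$, and $\N_\F(R)$ is now a saturated subsystem of $\F$ on $P$. If $\N_\F(R)$ were properly contained in $\F$, extreme sparseness applied with $Q = P$ would give $\N_\F(R) = \F_P(P)$, again making $\Aut_\F(R)$ a $p$-group — the same contradiction. Therefore $\N_\F(R) = \F$, i.e., $R \normaleq \F$. Since $R$ is $\F$-essential it is in particular $\F$-centric, so $\F$ is constrained.

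The one point I would check carefully, and which I expect to be the only real obstacle, is that the extreme sparseness definition is being applied legitimately to $\N_\F(R)$: we need it to be a saturated subsystem of $\F$ whose underlying $p$-group is a subgroup of $P$, and this is exactly the data in the definition provided $R$ is chosen fully $\F$-normalized. Everything else is a bookkeeping use of the essential-subgroup lemma. The author's promised second proof likely replaces the appeal to $\Aut_\F(R) \subseteq \Aut_P(R)$ being a $p$-group by Lemma \ref{l:essential lemma}(\ref{lp:essential no extend}), producing instead an explicit $\F$-automorphism of $R$ witnessing the non-triviality of $\N_\F(R)$ and then deriving the contradiction structurally.
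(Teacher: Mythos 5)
Your proof is correct and follows the same skeleton as the paper's first proof: dispose of essential rank zero via Lemma \ref{l:no essentials}, pass to a fully $\F$-normalized $\F$-essential subgroup $R$, show $\N_\F(R)$ is nontrivial, and let extreme sparseness force $\N_\F(R)=\F$, so that the $\F$-centric subgroup $R$ is normal. The one genuine difference is the step establishing nontriviality of $\N_\F(R)$. The paper invokes Lemma \ref{l:essential lemma}(\ref{lp:essential no extend}) to produce $\varphi \in \Aut_\F(R)$ with $\N_\varphi = R$, and observes that such a $\varphi$ cannot lie in $\Aut_{\N_P(R)}(R)$ because every morphism of $\F_{\N_P(R)}(\N_P(R))$ extends to $P$; indeed the author says this first proof is included precisely because part (\ref{lp:essential no extend}) may have independent interest. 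You instead note that $\Aut_{\N_\F(R)}(R)=\Aut_\F(R)$, which is not a $p$-group by Lemma \ref{l:essential lemma}(\ref{lp:p'-autos for essentials}), whereas triviality of $\N_\F(R)$ would force it to equal the $p$-group $\Aut_P(R)$. Your variant is slightly more elementary, bypassing the non-extendability lemma entirely, at the cost only of the explicit non-extending witness $\varphi$; your closing guess about which lemma the author uses is on target. Your explicit two-case split on whether $\N_P(R)<P$ or $\N_P(R)=P$ is exactly the content of the paper's single sentence ``since $\F$ is extremely sparse, it follows that $\N_\F(Q)=\F$,'' and your flagged worry---that $\N_\F(R)$ must be a legitimate saturated subsystem on a subgroup of $P$, which is why $R$ is taken fully $\F$-normalized---is handled the same way in the paper.
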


\begin{proof}
Let $\F$ be an extremely sparse fusion system on a finite $p$-group $P$. By Lemma \ref{l:no essentials}, we may assume that $P$ has a fully $\F$-normalized, $\F$-essential subgroup $Q$. By Lemma \ref{l:essential lemma}(\ref{lp:essential no extend}), there exists $\varphi \in \Aut_\F(Q)$ such that $\N_\varphi = Q$. As any morphism in $\F_{\N_P(Q)}(\N_P(Q))$ extends to $P$, we conclude that $\varphi \notin \Aut_{\N_P(Q)}(Q)$ and hence $\N_\F(Q)$ is nontrivial. Since $\F$ is extremely sparse, it follows that $\N_\F(Q) = \F$. As $Q$ is $\F$-essential, it is $\F$-centric and so $\F$ is constrained.
\end{proof}

Now, we offer a different proof, one that provides a simple classification of all extremely sparse fusion systems.

\begin{thm}\label{t:extremely sparse classification}
Let $\F$ be a saturated fusion system on a finite $p$-group $P$. If $\F$ is extremely sparse then
$\F = \F_P(P \rtimes A)$ where $A$ is a cyclic group of order $q$ for some prime $q \neq p$.
\end{thm}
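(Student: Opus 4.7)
The plan is to first establish $P \normal \F$, then apply Lemma \ref{l:no essentials} to obtain $\F = \F_P(P \rtimes \Out_\F(P))$, and finally classify $\Out_\F(P)$ as cyclic of prime order by exhibiting saturated subsystems of $\F$ corresponding to its subgroups.

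By Theorem \ref{t:extremely sparse}, $\F$ is constrained, so \cite[Proposition 4.3]{BrotoCastellanaGrodalLeviOliver2005} yields a finite group $G$ with $P \in \Syl_p(G)$, $\O_{p'}(G) = 1$ and $\C_G(\O_p(G)) \leq \O_p(G)$ such that $\F = \F_P(G)$. Since $\Aut_P(P)$ is a normal Sylow $p$-subgroup of $\Aut_\F(P)$, Schur--Zassenhaus produces a $p'$-complement $A \leq \Aut_\F(P)$ with $A \cong \Out_\F(P)$, and $A$ embeds faithfully into $\Aut(P)$ (its intersection with $\Aut_P(P)$ being trivial by order). Suppose, for a contradiction, that $P \not\normal \F$; then Lemma \ref{l:no essentials} guarantees at least one $\F$-essential subgroup. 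The normalizer subsystem $\N_\F(P)$ is saturated on $P$ (since $P$ is trivially fully $\F$-normalized), satisfies $P \normal \N_\F(P)$, and by Lemma \ref{l:no essentials} equals $\F_P(P \rtimes A)$. Since $\F$ has an essential while $\N_\F(P)$ does not, $\N_\F(P) \subsetneq \F$, so extreme sparseness forces $\N_\F(P) = \F_P(P)$. By faithfulness of the $A$-action, this gives $A = 1$, hence $\N_G(P) = P\,\C_G(P)$; Burnside's normal $p$-complement theorem combined with $\O_{p'}(G) = 1$ then yields $G = P$, so $\F = \F_P(P)$, contradicting the nontriviality of $\F$.

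Therefore $P \normal \F$, and Lemma \ref{l:no essentials} supplies $\F = \F_P(P \rtimes A)$ with $A$ a $p'$-group acting faithfully on $P$. To classify $A$, let $B \leq A$ be any subgroup. Then $\F_P(P \rtimes B)$ is a saturated subsystem of $\F$ on $P$, proper whenever $B < A$ (since $c_a$ for $a \in A \setminus B$ is not realized) and nontrivial whenever $B > 1$ (by faithfulness of the $B$-action). Extreme sparseness rules out any intermediate $B$, so $A$ has no proper nontrivial subgroups; hence $|A|$ is a prime $q$ and $A \cong C_q$, with $q \neq p$ because $A$ is a $p'$-group.

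The principal delicacy is in the first step: one must recognize $\N_\F(P)$ as a saturated subsystem of $\F$ \emph{on $P$ itself} and verify that it is proper whenever essentials exist, and then convert the collapse $\N_\F(P) = \F_P(P)$ into global triviality of $\F$ by passing to the model $G$ and invoking Burnside. After this, the classification of $A$ is an almost immediate consequence of the definition of extreme sparseness.
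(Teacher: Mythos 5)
Your second half --- once $\F = \F_P(P \rtimes A)$ with $A$ a $p'$-group acting faithfully on $P$, extreme sparseness forbids any subgroup $1 < B < A$ (since $\F_P(P\rtimes B)$ would be a nontrivial proper subsystem on $P$), so $A$ is cyclic of prime order $q \neq p$ --- is correct and is essentially the paper's argument. The gap is in your first half, where you try to establish $P \normal \F$. Having deduced $\N_\F(P) = \F_P(P)$, hence $\Out_\F(P)=1$ and $\N_G(P) = P\C_G(P)$ in a model $G$, you invoke Burnside's normal $p$-complement theorem to conclude $G = P$. Burnside's theorem requires $P \leq \Z(\N_G(P))$, i.e.\ $\N_G(P) = \C_G(P)$, which in particular forces $P$ to be abelian; the weaker condition $\N_G(P) = P\C_G(P)$ says only that $\Aut_G(P)$ is a $p$-group, and that does not imply $p$-nilpotency (this is precisely why Frobenius's criterion demands the condition for \emph{all} $p$-subgroups, and why the $p$-nilpotency theorems in this paper have content). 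Concretely, $G = S_4$ with $P = D_8$ satisfies $\O_{2'}(G) = 1$, $\C_G(\O_2(G)) \leq \O_2(G)$ and $\N_G(P) = P = P\C_G(P)$, yet $G \neq P$ and $\F_P(G)$ is nontrivial.

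This is not a repairable slip within your setup: up to the Burnside step the only consequence of extreme sparseness you have used is that proper subsystems \emph{on $P$ itself} are trivial, i.e.\ ordinary sparseness. Since $\F_{D_8}(S_4)$ is sparse, constrained, nontrivial, and has $\N_\F(P) = \F_P(P)$ with $P \nnormal \F$, no argument using only those facts can succeed. One must apply extreme sparseness at a \emph{proper} subgroup, which is exactly what the paper does: if $Q$ is $\F$-essential, then $Q < P$ by Lemma \ref{l:essential lemma}(\ref{lp:P not essential}) and $\Out_\F(Q)$ contains a nontrivial $p'$-element $\alpha$ by Lemma \ref{l:essential lemma}(\ref{lp:p'-autos for essentials}), so $\F_Q(Q\rtimes\langle\alpha\rangle)$ is a nontrivial proper subsystem of $\F$ on $Q$, contradicting extreme sparseness. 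Hence $\rank_e(\F)=0$ and Lemma \ref{l:no essentials} yields $\F = \F_P(P\rtimes\Out_\F(P))$ directly --- with no need for Theorem \ref{t:extremely sparse}, the model $G$, or any normal complement theorem.
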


\begin{proof}
If $Q$ is an $\F$-essential subgroup of $P$, then, by Lemma \ref{l:essential lemma}(\ref{lp:p'-autos for essentials}), there exists a nontrivial $p'$-element $\alpha$ in $\Out_\F(Q)$. It follows that $\F_Q(Q)$ is a proper subsystem of $\F_Q(Q \rtimes \langle \alpha \rangle)$. However, Lemma \ref{l:essential lemma}(\ref{lp:P not essential}) implies $Q \neq P$ and so $\F_Q(Q \rtimes \langle \alpha \rangle)$ is a proper subsystem of $\F$, contradicting the extreme sparseness of $\F$. Therefore, $\rank_e(\F) = 0$ and by Lemma \ref{l:no essentials}, $\F = \F_P(P \rtimes \Out_\F(P))$. As $\F$ is nontrivial, there exists a nontrivial $q$-automorphism $\beta$ of $P$ for some prime $q \neq p$. The sparseness of $\F$ implies that $\F = \F_P(P \rtimes \langle \beta \rangle)$. 
\end{proof}

Let $\F$ be a saturated fusion system on a finite $p$-group $P$. The {\em focal subgroup} of $\F$ is defined as
\[
[P,\F] = \langle x^{-1}\varphi(x) \mid x \in P, \varphi \in \Hom_\F(\langle x \rangle, P) \rangle.
\]
As with the analogue for group theory, the focal subgroup of a fusion system controls the existence of subsystems of $p$-power index with abelian quotient. For more detail and properties we refer the reader to \cite{BrotoCastellanaGrodalLeviOliver2007} and \cite{DiazGlesserMazzaParkTr}. In the former reference, the reader can find a definition of $O^p(\F)$. Here it suffices to say that $O^p(\F)$ is the unique saturated subfusion system of $\F$ on $[P,O^p(\F)]$ with $p$-power index and that $\F$ is the unique saturated subfusion system of $\F$ on $P$ with $p$-power index.

\begin{cor} Let $\F$ be an extremely sparse fusion system on a finite $p$-group $P$ and let $A$ be a $p'$-group of $\F$-automorphisms of $P$ such that $\F = \F_P(P \rtimes A)$. 
\begin{enumerate}
\item If $Q < P$, then $\N_A(Q) = \C_A(Q)$.\label{c:nosubcentral}
\item $[P, \F] = P$.\label{c:[P,F]=P}
\item $O^p(\F) = \F$.\label{c:O^p(F)=F}
\end{enumerate}
\end{cor}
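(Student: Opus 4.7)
The plan is to invoke Theorem \ref{t:extremely sparse classification} at the outset, so that $A = \langle \alpha \rangle$ may be taken cyclic of prime order $q \neq p$, and then to prove the three assertions in order, using each to drive the next.

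For (1), I would take $\beta \in \N_A(Q)$ with $Q < P$ and consider $\F_Q(Q \rtimes \langle \beta|_Q \rangle)$. Its morphisms are restrictions of $\F$-morphisms, so it is a saturated subsystem of $\F$ on the proper subgroup $Q$, and extreme sparseness forces it to coincide with $\F_Q(Q)$. In particular $\beta|_Q$ must lie in $\Aut_Q(Q) \cong Q/\Z(Q)$; but $\Aut_Q(Q)$ is a $p$-group while $\beta|_Q$ has order dividing $q \neq p$, so $\beta|_Q = \id_Q$, i.e., $\beta \in \C_A(Q)$.

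For (2), I would first note that $[P, \F]$ coincides with the focal subgroup of $P$ in the group $G := P \rtimes A$, which a brief commutator calculation identifies as $P' \cdot [P, A]$; it thus suffices to prove $[P, A] = P$. If $[P, A] < P$, then $[P, A]$ is a proper $A$-invariant subgroup of $P$, and part (1) applied to $Q = [P, A]$ forces $A$ to centralize $[P, A]$. The standard coprime-action identity $[[P, A], A] = [P, A]$, valid because $\gcd(|A|, |P|) = 1$, then gives $[P, A] = 1$, contradicting the nontriviality of the $A$-action on $P$. Hence $[P, A] = P$ and $[P, \F] = P$.

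For (3), I would work inside $G := P \rtimes A$ and observe that $O^p(G)$ is a normal subgroup of $G$ containing the $p'$-subgroup $A$, hence also its normal closure $\langle A^G \rangle$. A direct commutator computation yields $\langle A^G \rangle = [P, A] \cdot A$, which equals $P \cdot A = G$ by (2). Thus $O^p(G) = G$, and invoking the standard identification $O^p(\F_P(G)) = \F_{P \cap O^p(G)}(O^p(G))$ from \cite{BrotoCastellanaGrodalLeviOliver2007}, one obtains $O^p(\F) = \F_P(G) = \F$. The one nontrivial ingredient here is the final identification of $O^p$ for group-realizable fusion systems; the commutator and coprime-action manipulations are otherwise routine.
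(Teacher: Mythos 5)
Your argument is correct. Parts (1) and (2) are essentially the paper's proof: (1) is identical, and in (2) you and the paper both apply (1) to a commutator subgroup together with a coprime-action fact --- the paper observes that a nontrivial $p'$-automorphism $\alpha$ normalizes $[P,\alpha]$, acts trivially on $P/[P,\alpha]$, and hence cannot centralize $[P,\alpha]$, while you package the same content as $[[P,A],A]=[P,A]$. Your detour through the focal subgroup theorem ($[P,\F]=P\cap G'=P'[P,A]$) is harmless but unnecessary: $u^{-1}\alpha(u)$ is literally one of the defining generators of $[P,\F]$, so $[P,\alpha]\leq[P,\F]$ is immediate from the definition, which is all the paper uses. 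Part (3) is where you genuinely diverge: the paper stays inside the abstract theory of subsystems of $p$-power index, using that $[P,\F]=\mathfrak{hyp}(\F)\cdot P'$ forces $[P,O^p(\F)]=P$ and hence that $\F$ admits no proper subsystem of $p$-power index; you instead compute $O^p(G)=\langle A^G\rangle P=G$ inside $G=P\rtimes A$ and invoke the identification $O^p(\F_P(G))=\F_{P\cap O^p(G)}(O^p(G))$. Both are valid and of comparable weight --- each leans on the $O^p$ machinery of \cite{BrotoCastellanaGrodalLeviOliver2007} at exactly one point --- but the paper's version has the small advantage of not requiring $\F$ to be realized as a group fusion system at that step (even though constraint guarantees it here), whereas your version makes the conclusion more concrete. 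No gaps.
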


\begin{proof}
Let $1 \neq \alpha \in A$ such that $\alpha(Q) = Q$. This implies $\alpha|_Q$ is a $p'$-automorphism in $\Aut_\F(Q)$. Since $Q < P$, the subsystem $\F_Q(Q \rtimes \langle \alpha \rangle)$ is a proper subsystem of $\F$ and, hence, is trivial, an impossibility unless $\alpha$ is the identity on $Q$. This proves (\ref{c:nosubcentral}). For $u \in P$,
\[
\alpha(u^{-1}\alpha(u)) = \alpha(u)^{-1}\alpha(\alpha(u)) \in [P, \alpha],
\] 
and so $\alpha$ normalizes $[P,\alpha]$. As $\alpha$ induces the identity on $P/[P,\alpha]$, $\alpha$ cannot centralize $[P, \alpha]$ (else it would be the identity on $P$). By (\ref{c:nosubcentral}) we conclude that $P =[P, \alpha] \leq [P,\F]$, proving (\ref{c:[P,F]=P}). Finally, as $P/[P, O^p(\F)]$ controls the existence of a saturated subsystem of $\F$ with $p$-power index and $P/[P,\F]$ controls the existence of a saturated subsystem of $\F$ with $p$-power index on an abelian quotient, $[P,\F] = P$ implies $[P, O^p(\F)] = P$ and so there is no proper saturated subsystem with $p$-power index. This gives (\ref{c:O^p(F)=F}).   
\end{proof}

For a saturated fusion system $\F$ on a finite $p$-group $P$ and $Q \leq P$, set $[Q,\F; 0] = Q$ and, for positive integers $i$, define
\[
[Q, \F; i] = [[Q, \F; i-1], \F].
\]
As $[Q,\F; i] \leq [Q, \F; i-1]$ for any postive integer $i$, we may define
\[
[Q, \F; \infty] = \bigcap\limits_{i = 0}^{\infty} [Q, \F; i].
\]
Note that if $\CG$ is a subfusion system of $\F$ on $P$, then $[P, \CG; i] \leq [P, \F; i]$ for all $i$. Similarly, if $Q \normal \F$, then $[P/Q, \F; i] \leq [P, \F; i]Q/Q$ for all $i$. Our final corollary generalizes a well-known result of groups (see \cite[Theorem 4.3]{HigmanDG1953} or \cite[Proposition 12.4]{Passman1968}) to fusion systems.

\begin{cor}\label{hyperfocal}
 Let $\F$ be a saturated fusion system on a finite $p$-group $P$. If $[P, \F; \infty] = 1$, then $\F = \F_P(P)$.
\end{cor}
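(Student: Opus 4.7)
My plan is to argue by contradiction: assuming $\F \neq \F_P(P)$, I will produce a nontrivial $p'$-automorphism $\alpha$ of some subgroup $T \leq P$ and show that $[T, \alpha]$ is forced into $[P, \F; i]$ for every $i \geq 1$, contradicting $[P, \F; \infty] = 1$. I will locate the pair $(T, \alpha)$ via Lemma \ref{l:no essentials}: if $\rank_e(\F) > 0$, I pick any $\F$-essential subgroup $T$ and invoke Lemma \ref{l:essential lemma}(\ref{lp:p'-autos for essentials}) for a nontrivial $p'$-element $\alpha \in \Aut_\F(T)$; otherwise $\rank_e(\F) = 0$, so $\F = \F_P(P \rtimes \Out_\F(P))$ with $\Out_\F(P) \neq 1$ (else $\F = \F_P(P)$), and the Sylow axiom produces a nontrivial $p'$-element $\alpha \in \Aut_\F(P)$, for which I set $T = P$. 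In both branches, $\alpha$ is a nontrivial automorphism of the $p$-group $T$ whose order is coprime to $p$.

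The engine of the argument is the inductive claim that $[T, \alpha] \leq [P, \F; i]$ for every $i \geq 1$. The base case $i = 1$ is immediate: each generator $x^{-1}\alpha(x)$ of $[T, \alpha]$ lies in $[P, \F]$ since $\alpha|_{\langle x \rangle}$ is a morphism in $\Hom_\F(\langle x \rangle, P)$. For the inductive step, the coprime-action identity $[T, \alpha] = [[T, \alpha], \alpha]$ (valid because $|\alpha|$ is coprime to $|T|$ and $[T, \alpha]$ is $\alpha$-invariant) expresses $[T, \alpha]$ as generated by elements $y^{-1}\alpha(y)$ with $y \in [T, \alpha] \leq [P, \F; i]$, and each such generator sits in $[[P, \F; i], \F] = [P, \F; i+1]$ by definition. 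Intersecting over $i$ forces $[T, \alpha] \leq [P, \F; \infty] = 1$, so $\alpha$ fixes $T$ pointwise and hence $\alpha = \id_T$, contradicting the choice of $\alpha$.

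The step I expect to require the most care is the inductive step, where the interplay between the coprime-action identity and the definition of $[P, \F; i+1]$ must be handled precisely --- in particular confirming that $\alpha|_{\langle y \rangle}$ counts as an $\F$-morphism into $P$ (via the inclusion $T \hookrightarrow P$) so that $y^{-1}\alpha(y)$ appears as a prescribed generator of $[P, \F; i+1]$. Viewed this way, the proof is a natural fusion-theoretic transcription of the coprime-action argument underlying the Higman--Passman group-theoretic result cited.
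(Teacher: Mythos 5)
Your proof is correct, but it takes a genuinely different route from the paper's. The paper argues by minimal counterexample: any proper subsystem on any $Q \leq P$ inherits the hypothesis, so a minimal counterexample is extremely sparse, whence the classification $\F = \F_P(P \rtimes A)$ and its corollary force $[P,\F;\infty] = P$, a contradiction. You instead argue directly: you extract a nontrivial $p'$-automorphism $\alpha$ of some $T \leq P$ (from an $\F$-essential subgroup via Lemma \ref{l:essential lemma}(\ref{lp:p'-autos for essentials}), or from $\Aut_\F(P)$ via Lemma \ref{l:no essentials} and the Sylow axiom when there are no essentials), and then push $[T,\alpha]$ down the series using the coprime-action identity $[T,\alpha] = [[T,\alpha],\alpha]$, concluding $[T,\alpha] \leq [P,\F;\infty] = 1$ and hence $\alpha = \id_T$. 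Both arguments are sound and both rely on the same implicit extension of the focal-subgroup definition to $[R,\F]$ for $R \leq P$; your inductive step is handled correctly, including the point that $\alpha|_{\langle y\rangle}$ composed with the inclusion $T \hookrightarrow P$ is a morphism in $\Hom_\F(\langle y\rangle, P)$. What your approach buys is self-containedness: it bypasses the minimal-counterexample reduction, the notion of extreme sparseness, and Theorem \ref{t:extremely sparse classification} entirely, and it makes the parallel with the classical Higman--Passman coprime-action proof explicit. What the paper's approach buys is a showcase for its central theme --- that minimal counterexamples to triviality statements are (extremely) sparse --- and reuse of structural results already established; like yours, it does not invoke the original group-theoretic theorem, so both constitute independent proofs of that result.
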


\begin{proof}
 Let $\F$ be a minimal counterexample with respect to $|\F|$,
  the number of morphisms in $\F$. If $\CG$ is a proper
   subfusion system of $\F$ on $Q \leq P$, then $[Q, \CG; \infty]
    \leq [P, \F; \infty] = 1$ and so by the minimality of
     $\F$, we have $\CG = \F_Q(Q)$. Therefore, $\F$ is
     extremely sparse. However, the previous corollary implies that $[P, \F; \infty] = P$. We conclude that $P = 1$ and $\F$ is trivial, a contradiction.

%     By Theorem \ref{extremely sparse}, $\F$ is constrained and the result follows from (\cite[4.3]{BrotoCastellanaGrodalLeviOliver2005}) and the corresponding result from group theory (see~\cite[Proposition 12.4]{Passman1968}).

%     If $\F$ is constrained, then the result
%         follows from (\cite[4.3]{BrotoCastellanaGrodalLeviOliver2005})
%          and the corresponding result from group theory (see~\cite[Proposition 12.4]{Passman1968}). Otherwise, by Proposition \ref{extremely sparse}, $\F = P\C_\F(Q)$ where $1 \neq Q = \O_p(\F)$. We then have $[P/Q, \F/Q; \infty] \leq [P, \F; \infty]Q/Q = 1$ and so by the minimality of $\F$, $\F/Q$ is trivial. Lemma \ref{F=PC_\F(Q)} then implies $\F$ is trivial, a contradiction.
\end{proof}

Note that unlike many of the previous proofs, this one does not utilize the original theorem from group theory, so that this gives an alternate proof of that result.

\section{acknowledgements}
The author thanks I.M. Isaacs for several valuable discussions at MSRI about Navarro's result as well as supplying the proof of Theorem \ref{t:Navarro} in its original form. Thanks to Burkhard K\"ulshammer for suggesting Corollary \ref{cor to Navarro} and Radha Kessar for her advice and guidance in the writing of this article. The author thanks the referee for several insightful comments that improved the exposition. Finally, thank you to David Craven for coming up with the examples of non-constrained sparse systems and for helpful suggestions at several points in the paper.

\bibliography{mybib}

\end{document}